\newtheorem{theorem}{Theorem}[section]
\newtheorem{corollary}{Corollary}[section]
\def\VIZ#1{(\ref{#1})}      % use for references to formulae
\def\FISHER#1{{\mathcal I}\big(#1\big)}
\def\FISHERR#1{{\mathcal I}_{\mathcal{H}}\big(#1\big)}
\def\RELENTR#1#2{\mathcal{H}\left({#1}\SEP{#2}\right)}
\def\RELENTI#1#2{\mathcal{H}\left({#1}\SEP{#2}\right)}
\def\RELENT#1#2{\mathcal{R}\left({#1}\SEP{#2}\right)}
\def\SEP{{\,|\,}}           % use for seperator in set defs, like x\in R \SEP
\def\GPATHS{Q_{0:T}}
\def\PATHS{Q_{0:T}^{\theta}}
\def\IPATHS#1{Q_{#1}^{\theta}}
\def\GPATHSAPP{\bar{Q}_{0:T}}
\def\PATHSAPP{Q_{0:T}^{\theta+\epsilon}}
\def\IPATHSAPP#1{Q_{#1}^{\theta+\epsilon}}
\def\EQUIL{\mu^\theta}
\def\INIT{\nu^\theta}
\def\INITAPP{\nu^{\theta+\epsilon}}
\def\EXPECT{\mathbb{E}}
\def\var{\mathrm{Var}}
\def\STATE{\mathbf{x}}
\begin{document}

\title{Pathwise Sensitivity Analysis in Transient Regimes}

\author[1]{Georgios Arampatzis\thanks{arampatzis@math.umass.edu}}
\author[1]{Markos A. Katsoulakis\thanks{markos@math.umass.edu}}
\author[1]{Yannis Pantazis\thanks{pantazis@math.umass.edu}}
\affil[1]{Department of Mathematics and Statistics University of Massachusetts, Amherst, MA, USA}

\date{}

\maketitle

\abstract{The instantaneous relative entropy (IRE) and the corresponding instantaneous Fisher information matrix (IFIM) for transient stochastic processes are presented in this paper.  These novel tools for sensitivity analysis of stochastic models serve as an extension of the well known relative entropy rate (RER) and the corresponding Fisher information matrix (FIM) that apply to stationary processes. Three cases are studied here, discrete-time Markov chains, continuous-time Markov chains and stochastic differential equations. A biological reaction network is presented as a demonstration numerical example. }

%-------------------------------------------------------------------------------------------------------------------------------------------------------------
%				INTRODUCTION
%-------------------------------------------------------------------------------------------------------------------------------------------------------------

\section{Introduction}

Sensitivity analysis, for a general mathematical model, is defined to be the quantification of  system response to parameter perturbations.
Questions on the robustness, (structural) identifiability, experimental design, uncertainty quantification, estimation and  control can be addressed through sensitivity analysis \cite{Saltelli:08}. Moreover,  it is a necessary analysis tool for the study of kinetic models such as chemical and biochemical reaction networks \cite{Saltelli:08,Distefano:13}. 
The mathematical models considered in this paper are models that describe phenomena exhibiting stochasticity: stochastic differential equations (SDE), discrete-time and continuous-time  Markov chains (DTMC and CTMC).  Some of the mathematical tools for the sensitivity analysis of such systems include log-likelihood methods and Girsanov transformations \cite{Glynn:90,Nakayama:94,Plyasunov:07}, polynomial chaos \cite{Kim:07}, finite difference methods  \cite{Rathinam:10, Anderson:12} and pathwise sensitivity methods \cite{Khammash:12}. Closely related to the log-likelihood methods are various  linear-response-based approaches, for instance in the context of chemical kinetics \cite{Meskine:09}, as well as in recent mathematical work for linear response in non-equilibrium systems \cite{Maes:09,Maes:10}.

In \cite{Pantazis:Kats:13}, the authors propose a new methodology for the pathwise sensitivity analysis of complex stochastic stationary dynamics based on the Relative Entropy (RE) between path distributions and Relative Entropy Rate (RER). This two quantities provide a measure of the sensitivity of the entire time-series distribution. The space of all such time-series is referred in probability theory as the ``path space''. RER measures the loss of information per unit time in path space after an arbitrary perturbation of parameter combinations. Moreover, RER and the corresponding Fisher Information Matrix (FIM) become computationally feasible in certain cases as they admit explicit formulas.
In fact, it is been showed in \cite{Pantazis:Kats:13} that the proposed pathwise sensitivity analysis has the following properties: (a) it is rigorously valid for the sensitivity of long-time, stationary dynamics, (b) it is a gradient-free sensitivity analysis method suitable for high-dimensional parameter spaces, (c) the computation of RER and FIM does not require the explicit knowledge of the equilibrium probability distribution, relying only on information for local dynamics, i.e. it is suitable for non-equilibrium systems.

In this paper we extend the RE and FIM tools, developed in \cite{Pantazis:Kats:13} for stationary processes, to transient and non-stationary processes. The extension is based on the notion of instantaneous RE (IRE) and instantaneous FIM (IFIM), see for example equations (\ref{inst:Rel:entr:DTMC}) and (\ref{inst:FIM:DTMC}) in text. These sensitivity tools provide an instantaneous measure for the sensitivity of a system arising from information theoretic tools.  Moreover, both IRE and IFIM, which are independent of observable functions, can be used as upper bounds for observable depended sensitivities, see for example the discussion in Conclusions and \cite{DKPP:2014,AKP:2015}.

The rest of the paper is organized as follows: In Section 2 a review of the RE and the FIM is given. In Sections 3,4 and 5 the new quantities IRE and IFIM are presented for discrete-time Markov chains, continuous-time Markov chains  and stochastic differential equations, respectively. In Section 6, a numerical example for a biological reaction network is utilized and IRE and IFIM are computed in the course of a stochastic simulation and various observations are discussed. Finally, in Section 7, concluding remarks and connections with existing works are discussed.

%-------------------------------------------------------------------------------------------------------------------------------------------------------------
%				END  INTRODUCTION
%-------------------------------------------------------------------------------------------------------------------------------------------------------------

\section{Time-dependent sensitivity analysis}

\subsection{Decomposition of the pathwise relative entropy}
The relative entropy (or Kullback-Leibler divergence) of a probability measure $P$
with respect to (w.r.t.) another probability measure $\bar{P}$ is defined as \cite{Kullback:59, Cover:91}
\begin{equation}
\RELENT{P}{\bar{P}} := \left\{ \begin{array}{ll}
\int \log{\frac{dP}{d\bar{P}}} d P \ , & \ \ P\ll \bar{P} \\
\infty \ , & \ \ \text{otherwise}
\end{array} \right.
\label{RE:def}
\end{equation}
where $\frac{dP}{d\bar{P}}$ is a function known as the Radon-Nikodym derivative which is well-defined
when $P$ is absolutely continuous w.r.t. $\bar{P}$ (denoted as $P\ll \bar{P}$) while the integration is performed
w.r.t. the probability measure $P$. Relative entropy has been utilized in a diverge range of scientific
fields from statistical mechanics \cite{Kipnis:99} to telecommunications \cite{Cover:91} and finance
\cite{Avellaneda:97} and possesses the following fundamental properties:
\begin{itemize}
\item[(i)] \ \ \ $\RELENT{P}{\bar{P}} \ge 0$,
\item[(ii)] \ \ \ $\RELENT{P}{\bar{P}} = 0$ if and only if $P= \bar{P} \;\;P$-almost everywhere, and,
\item[(iii)] \ \ \ $\RELENT{P}{\bar{P}}<\infty$ if and only if $P$ and $\bar{P}$ are absolutely continuous w.r.t. each other.
\end{itemize}
These properties allow us to view relative entropy  as a
``distance" (more precisely a divergence) between two probability measures capturing the relative
importance of uncertainties \cite{Liu:06}. From an information theory
perspective, relative entropy measures the {\it loss/change of information}
when $\bar{P}$ is considered instead of $P$ \cite{Cover:91}.

Let a stochastic process --either discrete-time or continuous-time-- be denoted by $X_t$ and let
the path space $\mathcal X$ be the set of all trajectories $\{X_t \}_{t=0}^T$. We denote by $\GPATHS$
the path space distribution, i.e., the probability to see a particular element of path space, $\mathcal X$.
Denote by $\GPATHSAPP$ the path space distribution of another process, $\bar{X}_t$. The
pathwise relative entropy of the distribution $\GPATHS$ w.r.t. the distribution $\GPATHSAPP$
assuming that they are absolutely continuous w.r.t. each other is written using \VIZ{RE:def} as
\begin{equation}
\RELENT{\GPATHS}{\GPATHSAPP} = \int \log \frac{d\GPATHS}{d\GPATHSAPP}  d\GPATHS \ .
\label{pathwise:RE:gen}
\end{equation}

A key property of pathwise relative entropy is that it is an increasing function of time which is the
analog of the second thermodynamic law in statistical physics \cite{Cover:91}. To this end, for
various important Markov processes, it can be shown, exploiting the Markov property, that
the pathwise relative entropy can be written as an averaged quantity as
\begin{equation}
\RELENT{\GPATHS}{\GPATHSAPP} = \left\{ \begin{array}{ll}
\RELENT{\nu}{\bar{\nu}} + \sum_{i=1}^T \RELENTR{Q_{i}}{\bar{Q}_{i}} \ , & \text{\ \ if time is discrete} \\
\RELENT{\nu}{\bar{\nu}} + \int_0^T  \RELENTR{Q_{t}}{\bar{Q}_{t}} dt \ , & \text{\ \ if time is continuous} \ ,
\end{array} \right.
\end{equation}
where $\RELENT{\nu}{\bar{\nu}}$ is the relative entropy of the initial distribution $\nu$
w.r.t. the perturbed initial distribution $\bar{\nu}$, while
$\RELENTI{Q_{\cdot}}{\bar{Q}_{\cdot}}$ denotes the instantaneous relative entropy
(for explicit formulas we refer to equation \VIZ{inst:Rel:entr:DTMC},
\VIZ{inst:Rel:entr:CTMC} and \VIZ{inst:Rel:entr:SDE}).
We named the quantity $\RELENTI{Q_{\cdot}}{\bar{Q}_{\cdot}}$
instantaneous relative entropy because it is also non-negative but more importantly is the
time-derivative of the pathwise relative entropy and inherits the properties of the relative entropy.
Notice that the notation for instantaneous relative entropy is somewhat confusing but it will
become clear when specific examples will be presented in the following sections.

Moreover the relative entropy rate is defined for a broad class of stochastic processes including
Markov and semi-Markov processes as \cite{Limnios:01} as the limit
\begin{equation}
\RELENTR{Q_{}}{\bar{Q}_{}} = \lim_{T\rightarrow\infty} \frac{1}{T}\RELENT{\GPATHS}{\GPATHSAPP} \ .
\end{equation}
Relative entropy rate is closely related to the instantaneous relative entropy in two
fundamental ways, Firstly it is the limit of instantaneous relative entropy as time goes to
infinity, whenever this limit is defined and second it holds at the stationary regime that
\begin{equation}
\RELENTI{Q_{i}}{\bar{Q}_{i}} = \RELENTR{Q_{}}{\bar{Q}_{}}
\end{equation}
for the discrete-time case with $i=1,...,T$ and similarly for the continuous-time case.

\subsection{Sensitivity analysis and Fisher information matrix}
In \cite{Pantazis:Kats:13}, authors proposed the relative entropy between path distributions to
perform sensitivity analysis arguing that pathwise relative entropy takes into account not only
the equilibrium properties but also the complex dynamics of a stochastic process. Specifically,
denote by $\PATHS$ the path space distribution of the process $X_t$, parametrized by the parameter vector
$\theta\in\mathbb R^K$. Consider also a perturbation vector, $\epsilon\in\mathbb R^K$, and denote
by $\PATHSAPP$ the path space distribution of the perturbed process $\bar{X}_t$. The
pathwise relative entropy of the distribution $\PATHS$ w.r.t. the distribution $\PATHSAPP$
assuming that they are absolutely continuous w.r.t. each other is written from \VIZ{pathwise:RE:gen} as
\begin{equation}
\RELENT{\PATHS}{\PATHSAPP} = \int \log \frac{d\PATHS}{d\PATHSAPP}  d\PATHS \ .
\label{pathwise:RE}
\end{equation}

An attractive approach to sensitivity analysis that is rigorously based on relative entropy
calculations is the Fisher Information Matrix (FIM). Indeed, assuming smoothness in the parameter
vector, it is straightforward to obtain the following expansion for \VIZ{pathwise:RE} \cite{Cover:91, Abramov:05},
\begin{equation}
\RELENT{{\PATHS}}{{\PATHSAPP}} = \frac{1}{2} \epsilon^T \FISHER{\PATHS} \epsilon + O(|\epsilon|^3)\, , 
\label{GFIM0}
\end{equation}
where the $K\times K$ pathwise FIM  $\FISHER{\PATHS}$ is defined
as the Hessian of the pathwise relative entropy.
As \VIZ{GFIM0} readily suggests, relative entropy is locally a quadratic function of the parameter
vector $\theta$. Indeed, the pathwise RE for any perturbation can be recovered up to third-order
utilizing only the pathwise FIM. Moreover, similar expansions hold for the instantaneous relative
entropy and the relative entropy rate. The decomposition of the pathwise FIM reads
\begin{equation}\label{pathwise:FIM}
\FISHER{\PATHS} = \left\{ \begin{array}{ll}
\FISHER{\INIT} + \sum_{i=1}^T \FISHERR{\IPATHS{i}} \ , & \text{\ \ if time is discrete} \\
\FISHER{\INIT} + \int_0^T  \FISHERR{\IPATHS{t}} dt \ , & \text{\ \ if time is continuous}
\end{array} \right.
\end{equation}
where $\FISHER{\INIT}$ is the FIM of the initial distribution $\INIT$ while $\FISHERR{\IPATHS{\cdot}}$
denotes the instantaneous FIM. In the following sections concrete examples of
stochastic Markov processes are presented whose pathwise relative entropy and the associated pathwise FIM
is provided.

\section{Discrete-time Markov chains}
\label{DTMC:app}
This section presents explicit formulas of the various relative entropy quantities defined in the previous
section as well as the associated FIMs for the case of discrete-time Markov chains.
The analysis of the DTMC case serves (a) as a more intuitive and manageable example of stochastic
processes and (b) as a intermediate step to handle the continuous-time Markov chain case.

Next, let $\{x_i\}_{i\in\mathbb Z^+}$ be a discrete-time time-homogeneous Markov chain with
separable state space $E$. The transition probability kernel of the Markov chain denoted
by  $P^\theta(x, dx')$ depends on the parameter vector $\theta\in\mathbb R^K$.
Assume that the transition kernel is absolutely continuous with respect to  the Lebesgue
measure and the transition probability density function $p^\theta(x,x')$
is always positive for all $x,x'\in E$ and for all $\theta\in\mathbb R^K$.
Exploiting the Markov
property, the path space probability density $\PATHS$ for the path $\{x_i\}_{i=0}^T$
at the time horizon $0,1,\ldots,T$ starting  from the initial distribution 
 $\INIT(x)dx$ is given by
\begin{equation*}
\PATHS\big(x_0,\ldots, x_T \big) = \INIT(x_0) p^\theta(x_0,x_1) \ldots  p^\theta(x_{T-1},x_T)\, .
\end{equation*}
We consider a perturbation vector $\epsilon\in\mathbb R^K$ and the Markov chain
$\{\bar{x}_i\}_{i\in\mathbb Z^+}$ with  transition probability
density function, $p^{\theta+\epsilon}(x,x')$,  initial density, $\INITAPP(x)$, as well as 
path distribution $\PATHSAPP$.
The product representation of the path distributions results in an additive representation
of the relative entropy of the path distribution $\PATHS$ w.r.t. the perturbed
path distribution $\PATHSAPP$. Let $\nu_{i}^\theta(x)$ denote the probability density function
of the Markov chain at time instant $i$ given that the initial distribution is $\INIT$, whose
formula is provided by the Chapman-Kolmogorov equation,
\begin{equation*}
\nu_{i}^\theta(x) = \int_E\cdots\int_E \INIT(x_0)p^\theta(x_0,x_1)\ldots p^\theta(x_{i-1},x) dx_0\ldots dx_{i-1} \ .
\end{equation*}
The following theorem presents the decomposition of the pathwise relative entropy.

\begin{theorem}
(a) The pathwise relative entropy for the above-defined discrete-time Markov chain
is decomposed as
\begin{equation}
\RELENT{\PATHS}{\PATHSAPP}= \RELENT{\INIT}{\INITAPP} + \sum_{i=1}^T \RELENTR{\IPATHS{i}}{\IPATHSAPP{i}} \ ,
\label{Rel:entr:DTMC}
\end{equation}
where the instantaneous relative entropy equals to
\begin{equation}
\RELENTR{\IPATHS{i}}{\IPATHSAPP{i}}
= \mathbb E_{\nu_{i-1}^\theta}\Big[\int_E p^\theta(x,x')\log \frac{p^\theta(x,x')}{p^{\theta+\epsilon}(x,x')}dx'\Big]
\label{inst:Rel:entr:DTMC}
\end{equation}
(b) Under smoothness assumption on the transition probability function
for the parameter $\theta$, the pathwise FIM is also decomposed as
\begin{equation}
\FISHER{\PATHS} = \FISHER{\INIT} + \sum_{i=1}^T  \FISHERR{\IPATHS{i}} \ ,
\label{path:FIM:DTMC}
\end{equation}
where the instantaneous pathwise FIM is given by
\begin{equation}
\FISHERR{\IPATHS{i}} =
\mathbb E_{\nu_{i-1}^\theta}[\int_E p^\theta(x,x')\nabla_\theta \log p^\theta(x,x')\nabla_\theta \log p^\theta(x,x')^T dx'] \ 
\label{inst:FIM:DTMC}
\end{equation}
\end{theorem}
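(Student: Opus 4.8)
The plan is to exploit the product (Markov) factorization of the path density to turn the log–Radon–Nikodym derivative into a sum, and then to integrate term by term. First I would write
\[
\frac{d\PATHS}{d\PATHSAPP}(x_0,\ldots,x_T)
= \frac{\INIT(x_0)}{\INITAPP(x_0)}\prod_{i=1}^T \frac{p^\theta(x_{i-1},x_i)}{p^{\theta+\epsilon}(x_{i-1},x_i)} ,
\]
which is legitimate because every factor is strictly positive (by the standing positivity assumption on $p^\theta$) and the densities telescope. Taking logarithms gives
\[
\log\frac{d\PATHS}{d\PATHSAPP}
= \log\frac{\INIT(x_0)}{\INITAPP(x_0)}
+ \sum_{i=1}^T \log\frac{p^\theta(x_{i-1},x_i)}{p^{\theta+\epsilon}(x_{i-1},x_i)} ,
\]
and I would then integrate this expression against $\PATHS$ using the definition \VIZ{pathwise:RE}, handling each summand separately.

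The initial term depends only on $x_0$, whose $\PATHS$-marginal is $\INIT$, so it integrates to $\RELENT{\INIT}{\INITAPP}$. For the $i$-th summand the integrand depends only on the consecutive pair $(x_{i-1},x_i)$, so the real content is identifying its $\PATHS$-marginal law. By the Markov property together with the Chapman–Kolmogorov formula for $\nu_{i-1}^\theta$, that joint law is $\nu_{i-1}^\theta(x_{i-1})\,p^\theta(x_{i-1},x_i)$; marginalizing out the remaining coordinates and relabelling $(x_{i-1},x_i)\to(x,x')$ yields exactly \VIZ{inst:Rel:entr:DTMC}, which proves part (a). The only delicate point here is the measure-theoretic bookkeeping when integrating out the other coordinates, and this is precisely where the Markov structure is used.

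For part (b), the decomposition \VIZ{path:FIM:DTMC} follows immediately by applying the Hessian in $\epsilon$ at $\epsilon=0$ to the additive identity of part (a) and using linearity of the Hessian operator. To identify the instantaneous FIM, I would note that in \VIZ{inst:Rel:entr:DTMC} only the denominator $p^{\theta+\epsilon}$ carries $\epsilon$-dependence, while $\nu_{i-1}^\theta$ and the numerator depend only on the base $\theta$; hence two $\epsilon$-derivatives give
\[
\FISHERR{\IPATHS{i}}
= -\,\EXPECT_{\nu_{i-1}^\theta}\!\Big[\int_E p^\theta(x,x')\,
\nabla_\theta^2 \log p^\theta(x,x')\,dx'\Big] .
\]
I would then invoke the standard score identity
\[
\nabla_\theta^2\log p^\theta
= \frac{\nabla_\theta^2 p^\theta}{p^\theta}
- \nabla_\theta\log p^\theta\,\big(\nabla_\theta\log p^\theta\big)^T ,
\]
and use that $\int_E p^\theta(x,x')\,dx'\equiv 1$ forces $\int_E \nabla_\theta^2 p^\theta(x,x')\,dx' = 0$; this cancels the first term and leaves exactly the outer-product expression \VIZ{inst:FIM:DTMC}.

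The main obstacle is analytic rather than algebraic: I must justify passing the two $\epsilon$-derivatives through both the $dx'$ integral and the outer expectation $\EXPECT_{\nu_{i-1}^\theta}$, and likewise differentiating the normalization $\int_E p^\theta(x,x')\,dx'=1$ twice under the integral sign to kill the $\nabla_\theta^2 p^\theta$ term. This is the substance of the stated smoothness assumption, and I would secure it by a dominated-convergence argument supplying locally integrable bounds on the first and second $\theta$-derivatives of $p^\theta(x,\cdot)$, uniform for small $\epsilon$; given such bounds, the remaining computations are the routine ones sketched above.
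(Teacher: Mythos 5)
Your proof is correct, and part (a) is essentially identical to the paper's argument: factor the Radon--Nikodym derivative using the Markov property, split the logarithm into the initial term plus a telescoping sum, and integrate each summand against the two-step marginal $\nu_{i-1}^\theta(x)\,p^\theta(x,x')\,dx\,dx'$ to obtain \VIZ{inst:Rel:entr:DTMC}.

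For part (b) you take a mildly different but equivalent route. The paper never differentiates twice: it sets $\delta p = p^{\theta+\epsilon}-p^\theta$, expands $\log(1+\delta p/p^\theta)$ to second order, kills the first-order term via $\int_E \delta p(x,x')\,dx'=0$, and then substitutes the first-order Taylor expansion $\delta p = \epsilon^T\nabla_\theta p^\theta + O(|\epsilon|^2)$ to read off the quadratic form $\tfrac12\epsilon^T\FISHERR{\IPATHS{i}}\epsilon$. You instead compute the Hessian in $\epsilon$ directly and convert $-\EXPECT\big[\int p^\theta\,\nabla_\theta^2\log p^\theta\,dx'\big]$ into the outer-product form via the standard score identity together with $\int_E\nabla_\theta^2 p^\theta(x,x')\,dx'=0$. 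Both arguments hinge on the same fact --- differentiating the normalization $\int_E p^\theta(x,x')\,dx'=1$ under the integral sign --- so the underlying smoothness/domination requirements are the same; your version is slightly more direct given that the pathwise FIM is \emph{defined} as a Hessian, while the paper's expansion additionally exhibits the relative entropy as locally quadratic (no linear term), which is the property emphasized in \VIZ{GFIM0}. Your closing remark correctly identifies the interchange of differentiation and integration as the only analytic point to secure; the paper passes over this silently under its blanket smoothness assumption.
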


\begin{proof}
(a) The proof of this part of the theorem can be found in \cite[Ch. 2]{Cover:91} under
the title ``Chain rule for relative entropy''
but for the shake of completeness we present it here. The Radon-Nikodym derivative
of the unperturbed path distribution w.r.t. the perturbed path distribution takes the form
\begin{equation*}
\frac{d\PATHS}{d\PATHSAPP}\big( \{x_i\}_{i=0}^T \big) =
\frac{\INIT(x_0)\prod_{i=0}^{T-1}p^{\theta}(x_i,x_{i+1})}{\INITAPP(x_0)\prod_{i=0}^{T-1}p^{\theta+\epsilon}(x_i,x_{i+1})}\, ,
\end{equation*}
which is well-defined since the transition probabilities are always positive. Then,
\begin{equation*}
\begin{aligned}
&\RELENT{\PATHS}{\PATHSAPP} \\
&= \int_E\cdots\int_E \INIT(x_0)\prod_{j=1}^T p^\theta(x_{j-1},x_j)
\log \frac{\nu^\theta(x_0)\prod_{i=1}^T p^\theta(x_{i-1},x_i)}
{\INITAPP(x_0)\prod_{i=1}^T p^{\theta+\epsilon}(x_{i-1},x_i)} dx_0\ldots dx_T \\
&= \int_E\cdots\int_E \INIT(x_0)\prod_{j=1}^T p^\theta(x_{j-1},x_j) \log \frac{\INIT(x_0)}{\INITAPP(x_0)} dx_0\ldots dx_T \\
& \quad + \sum_{i=1}^T \int_E\cdots\int_E \INIT(x_0)\prod_{j=1}^T p^\theta(x_{j-1},x_j) 
\log \frac{p^\theta(x_{i-1},X_i)}{p^{\theta+\epsilon}(x_{i-1},x_i)}  dx_0\ldots dx_T \\
%
%&= \int_E \INIT(x_0) \log \frac{\INIT(x_0)}{\INITAPP(x_0)} dx_0 \\
%& \quad + \sum_{i=1}^T \int_E\cdots\int_E \INIT(x_0)\prod_{j=1}^{i} p^\theta(x_{j-1},x_j)
%\log \frac{p^\theta(x_{i-1},x_i)}{p^{\theta+\epsilon}(x_{i-1},x_i)} dx_0 \ldots dx_i \\
%
&= \RELENT{\INIT}{\INITAPP} + \sum_{i=1}^T \RELENTR{\IPATHS{i}}{\IPATHSAPP{i}} \ ,
\end{aligned}
\end{equation*}
where $\RELENT{\INIT}{\INIT}=\mathbb E_{\INIT}\Big[\log \frac{\INIT(x)}{\INIT(x)}\Big]$
is the relative entropy of the unperturbed initial distribution w.r.t. the perturbed one,
while the instantaneous relative entropy (of the time-varying pathwise relative entropy) is
\begin{equation*}
\RELENTR{\IPATHS{i}}{\IPATHSAPP{i}} =
\mathbb E_{\nu_{i-1}^\theta}\Big[\int_E p^\theta(x,x')\log \frac{p^\theta(x,x')}{p^{\theta+\epsilon}(x,x')}dx'\Big] \ .
\end{equation*}

\noindent
(b) The proof of this part of the theorem is similar to the proof of the pathwise FIM
for the relative entropy rate in \cite{Pantazis:Kats:13}. We present it with minor but
necessary adaptations. Let $\delta p(x,x') = p^{\theta+\epsilon}(x,x') - p^\theta(x,x')$,
then the instantaneous relative entropy $\RELENTR{\IPATHS{i}}{\IPATHSAPP{i}}$
at the $i$-th time instant is written as
\begin{equation*}
\begin{aligned}
&\RELENTR{\IPATHS{i}}{\IPATHSAPP{i}}
= - \int_E\int_E \nu_{i-1}^\theta(x) p^\theta(x,x') \log \left(1+ \frac{\delta p(x,x')}{p^\theta(x,x')}\right) dx dx' \\
&= - \int_E\int_E \left[\nu_{i-1}^\theta(x)\delta p(x,x')
-  \frac{1}{2}\nu_i^\theta(x)\frac{\delta p(x,x')^2}{p^\theta(x,x')}
+ O(|\delta p(x,x')|^3) \right] dx dx' \, .
\end{aligned}
\end{equation*}
Moreover, for all $x\in E$, it holds that
\begin{equation*}
\int_E \delta p(x,x')dx' = \int_E p^{\theta+\epsilon}(x,x')dx'
- \int_E p^{\theta}(x,x')dx' = 1-1 = 0.
\end{equation*}
Since the transition probability function is smooth w.r.t. the parameter vector $\theta$,
a Taylor series expansion to $\delta p$ gives,
\begin{equation*}
\delta p(x,x') = \epsilon^T \nabla_\theta p^\theta(x,x') + O(|\epsilon|^2)\ .
\end{equation*}
Thus, we finally obtain for all $i=1,...,T$, that
\begin{equation*}
\begin{aligned}
&\RELENTR{\IPATHS{i}}{\IPATHSAPP{i}} 
= \frac{1}{2} \int_E\int_E  \nu_{i-1}^\theta(x) \frac{(\epsilon^T \nabla_\theta p^\theta(x,x'))^2}{p^\theta(x,x')} dx dx' + O(|\epsilon|^3) \\
&= \frac{1}{2}\epsilon^T\Big( \int_E\int_E \nu_{i-1}^\theta(x) p^\theta(x,x) \nabla_\theta \log p^\theta(x,x')
 \nabla_\theta \log p^\theta(x,x')^T dx dx' \Big) \epsilon + O(|\epsilon|^3) \\
&= \frac{1}{2} \epsilon^T \FISHERR{\IPATHS{i}} \epsilon + O(|\epsilon|^3)
\end{aligned}
\end{equation*}
where,
\begin{equation*}
\FISHERR{\IPATHS{i}} =  \mathbb E_{\nu_{i-1}^\theta} \left[\int_E p^\theta(x,x') \nabla_\theta \ ,
\log p^\theta(x,x') \nabla_\theta \log p^\theta(x,x')^T d\,x'\right]
\end{equation*}
is the instantaneous FIM associated to the instantaneous relative entropy.

Consequently, the pathwise FIM $\FISHER{\PATHS}$, i.e., the Hessian of the
pathwise relative entropy \VIZ{Rel:entr:DTMC} at point $\theta$, is given by
\begin{equation*}
\FISHER{\PATHS} = \FISHER{\INIT} + \sum_{i=1}^T  \FISHERR{\IPATHS{i}} \ ,
%\nu_i^{\theta}\otimes p^\theta
\end{equation*}
where $\FISHER{\INIT} = \mathbb E_{\INIT}[\nabla_\theta \log\INIT(x)\nabla_\theta \log\INIT(x)^T]$
is the FIM of the initial distribution.
\end{proof}

{\bf Remark 1:} Let `$\otimes$' denote the product operator of two distributions (i.e.,
$\nu\otimes p(A\times B) = \int_A p(x,B)\nu(x)dx$). Then, the instantaneous relative
entropy can be written as a relative entropy of the probability measure $\nu_{i-1}^{\theta}\otimes p^\theta$
w.r.t. the probability measure $\nu_{i-1}^{\theta}\otimes p^{\theta+\epsilon}$. Mathematically,
\begin{equation*}
\RELENTR{\IPATHS{i}}{\IPATHSAPP{i}} = \RELENT{\nu_{i-1}^{\theta}\otimes p^\theta}{\nu_{i-1}^{\theta}\otimes p^{\theta+\epsilon}} \ ,
\end{equation*}
and similarly for the associated instantaneous FIM it holds that
\begin{equation*}
\FISHERR{\IPATHS{i}} = \FISHER{\nu_{i-1}^{\theta}\otimes p^\theta} \ .
\end{equation*}

{\bf Remark 2:} The instantaneous relative entropy $\RELENTR{\IPATHS{i}}{\IPATHSAPP{i}}$
is different from and should not be confused with the relative entropy of the unperturbed distribution
at the $i$-th (or the $(i-1)$-th) time instant $\nu_i^{\theta}$ (or $\nu_{i-1}^{\theta}$) w.r.t. the respective
perturbed distribution $\nu_i^{\theta+\epsilon}$ (or $\nu_{i-1}^{\theta+\epsilon}$).
Indeed, it holds that
\begin{equation*}
\RELENTR{\IPATHS{i}}{\IPATHSAPP{i}} = \RELENT{\nu_{i-1}^{\theta}\otimes p^\theta}{\nu_{i-1}^{\theta}\otimes p^{\theta+\epsilon}}
\ne \RELENT{\nu_i^{\theta}}{\nu_i^{\theta+\epsilon}} \ ,
\end{equation*}
as well as $\RELENTR{\IPATHS{i}}{\IPATHSAPP{i}} \ne \RELENT{\nu_{i-1}^{\theta}}{\nu_{i-1}^{\theta+\epsilon}}$.
Moreover, an explicit formula for the probability distribution at the $i$-th time instant $\nu_i^{\theta}$
is generally not available making the computation of the relative entropy $\RELENT{\nu_i^{\theta}}{\nu_i^{\theta+\epsilon}}$
intractable. On the other hand, the instantaneous relative entropy $\RELENTR{\IPATHS{i}}{\IPATHSAPP{i}}$
can be computed in a straightforward manner as a statistical average since it incorporates only the transition
probabilities which are known functions.

\medskip
\noindent
{\bf Stationary regime}:
In the stationary regime, the initial distribution is the stationary distribution, $\EQUIL(\cdot)$. Thus,
for all $i=1,...,T$ it holds that $\nu_i^\theta=\EQUIL$ and the instantaneous relative entropy is a
constant function of time that  is equal to the relative entropy rate. The next corollary presents explicit
formulas for the relative entropy rate and the associated FIM.

\begin{corollary}
The relative entropy rate equals to
\begin{equation}
\RELENTR{\IPATHS{}}{\IPATHSAPP{}}
= \mathbb E_{\EQUIL}\Big[\int_E p^\theta(x,x')\log \frac{p^\theta(x,x')}{p^{\theta+\epsilon}(x,x')}dx'\Big] \ .
\end{equation}
Similarly, the FIM associated to the RER is given by
\begin{equation}
\FISHERR{\IPATHS{}} = \mathbb E_{\EQUIL} \left[\int_E p^\theta(x,x) \nabla_\theta
\log p^\theta(x,x') \nabla_\theta \log p^\theta(x,x')^T d\,x'\right] \ .
\end{equation}
\end{corollary}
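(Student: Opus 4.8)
The plan is to derive the corollary by specializing the two decompositions of the Theorem to the case in which the initial distribution is the stationary distribution $\EQUIL$, and then invoking the limit definition of the relative entropy rate. The only new ingredients beyond the Theorem are the defining property of stationarity and the observation that a fixed, finite initial contribution is negligible in the Ces\`aro-type limit.

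First I would start from the pathwise decomposition \VIZ{Rel:entr:DTMC} with instantaneous terms \VIZ{inst:Rel:entr:DTMC}, and set $\INIT = \EQUIL$. By stationarity, $\nu_i^\theta = \EQUIL$ for every $i$, so each instantaneous relative entropy \VIZ{inst:Rel:entr:DTMC} becomes independent of the index $i$, namely the claimed expression $\mathbb{E}_{\EQUIL}[\int_E p^\theta(x,x')\log(p^\theta(x,x')/p^{\theta+\epsilon}(x,x'))\,dx']$; call this common value $c$. The decomposition then collapses to $\RELENT{\PATHS}{\PATHSAPP} = \RELENT{\EQUIL}{\EQUILAPP} + T\,c$.

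Next I would apply the definition $\RELENTR{Q_{}}{\bar{Q}_{}} = \lim_{T\to\infty} T^{-1}\RELENT{\GPATHS}{\GPATHSAPP}$. Dividing the collapsed identity by $T$ and letting $T\to\infty$, the term $T^{-1}\RELENT{\EQUIL}{\EQUILAPP}$ vanishes while $T^{-1}(T\,c) = c$ survives, yielding precisely the asserted formula for the RER. For the associated Fisher information I would argue identically on the pathwise FIM decomposition \VIZ{path:FIM:DTMC}: under stationarity each instantaneous FIM \VIZ{inst:FIM:DTMC} is independent of $i$ and equal to the expectation against $\EQUIL$, so the same $T\to\infty$ argument isolates that single matrix; equivalently, since the FIM is the Hessian in $\theta$ of the corresponding relative entropy and the RER equals the constant instantaneous relative entropy at $\EQUIL$, its Hessian is exactly the instantaneous FIM at $\EQUIL$.

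I do not expect a genuine obstacle: the whole argument is the substitution of $\EQUIL$ for $\nu_{i-1}^\theta$ together with the fact that a time-independent initial contribution washes out after dividing by $T$. The only point requiring care is verifying that the initial relative entropy $\RELENT{\EQUIL}{\EQUILAPP}$ is finite, so that $T^{-1}\RELENT{\EQUIL}{\EQUILAPP} = O(1/T)$; this is guaranteed by the standing hypotheses, since absolute continuity and strict positivity of the transition density ensure the two path distributions are mutually absolutely continuous and hence that the relative entropy is finite.
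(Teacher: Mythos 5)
Your proposal is correct and follows essentially the same route as the paper: the paper's proof consists precisely of substituting the stationary distribution $\EQUIL$ for $\nu_{i-1}^\theta$ in \VIZ{inst:Rel:entr:DTMC} and \VIZ{inst:FIM:DTMC}, relying on the observation (stated just before the corollary) that stationarity makes the instantaneous quantities constant in $i$. Your additional step of passing explicitly through the Ces\`aro limit $\lim_{T\to\infty} T^{-1}\RELENT{\PATHS}{\PATHSAPP}$ and checking that the initial term washes out is a welcome justification that the paper leaves implicit.
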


\begin{proof}
Both formulas are obtained by substituting the stationary distribution $\EQUIL$ to the place of $\nu_{i-1}^\theta$
in \VIZ{inst:Rel:entr:DTMC} and \VIZ{inst:FIM:DTMC}.
\end{proof}

\section{Continuous-time Markov chains}
\label{trans:RER:app}
Let $\{X_t\}_{t\in\mathbb R_+}$ be a continuous-time Markov chain  
with countable state space $E$. The parameter dependent transition
rates, denoted by $c^\theta(x, x')$, completely define the continuous-time Markov chain.
The transition rates determine the updates (jumps or
sojourn times) from a current state $x$ to a new (random) state $x'$ through
the total rate $\lambda^\theta(x)=\sum_{x'\in E} c^\theta(x, x')$ which
is the intensity of the exponential waiting time for a jump from state $x$. The
transition probabilities for the embedded Markov chain $\big\{x_n\big\}_{n\geq0}$
defined by $x_n:=X_{t_n}$ where $t_n$ is the instance of the $n$-th jump are
$p^\theta(x, x') = \frac{c^\theta(x, x')}{\lambda^\theta(x)}$.
% The discrete-time
%Markov chain $\{\xi_n\}_{n\in\mathbb Z^+} defined for all $n$ by $\xi_n = (x_n,\tau_n)$
%is probabilistically equivalent to the continuous-time Markov chain ${X_t\}_{t\in\mathbb R_+}$.

Assume another jump Markov process $\{\bar{X}_t\}_{t\in\mathbb R_+}$, 
defined by perturbing the transition rates by a small vector $\epsilon\in\mathbb R^k$.
Moreover assume that the two path probabilities  $\PATHS$ and $\PATHSAPP$ are absolutely continuous
with respect to  each other which is satisfied when $c^{\theta}(x, x')=0$ if and
only if $c^{\theta+\epsilon}(x, x')=0,$  $\forall x,x'\in E$. The following theorem
presents the decomposition of the pathwise relative entropy for the case of continuous-time
Markov chains.

\begin{theorem}
(a) The pathwise relative entropy for the above-defined continuous-time Markov chain
is decomposed as
\begin{equation}
\RELENT{\PATHS}{\PATHSAPP} = \RELENT{\INIT}{\INITAPP}
+ \int_0^T \RELENTR{\IPATHS{t}}{\IPATHSAPP{t}} dt \ ,
\label{Rel:entr:CTMC}
\end{equation}
where the instantaneous relative entropy equals to
\begin{equation}
\RELENTR{\IPATHS{t}}{\IPATHSAPP{t}} = \mathbb E_{Q_{0:t}^{\theta}}\left[ \lambda^{\theta}(X_{t-})
\log \frac{c^\theta(X_{t-}, X_{t})}{c^{\theta+\epsilon}(X_{t-}, X_{t})} - \big(\lambda^\theta(X_{t}) - \lambda^{\theta+\epsilon}(X_{t})\big)\right] \ .
\label{inst:Rel:entr:CTMC}
\end{equation}
(b) Under smoothness assumption on the transition rate function $c^\theta(\cdot, \cdot)$
for the parameter $\theta$, the pathwise FIM is also decomposed as
\begin{equation}
\FISHER{\PATHS} = \FISHER{\INIT} + \int_0^T  \FISHERR{\IPATHS{t}} dt \ ,
\label{path:FIM:CTMC}
\end{equation}
where the instantaneous pathwise FIM is given by
\begin{equation}
\FISHERR{\IPATHS{t}} = \mathbb E_{Q_{0:t}^{\theta}}\left[ \lambda^{\theta}(X_{t-})
\nabla_\theta \log c^\theta(X_{t-}, X_{t}) \nabla_\theta \log c^{\theta}(X_{t-}, X_{t})^T \right] \ .
\label{inst:FIM:CTMC}
\end{equation}
\end{theorem}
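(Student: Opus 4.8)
The plan is to replace the product-of-densities representation used in the discrete-time Theorem by the Girsanov-type change-of-measure formula for jump Markov processes. First I would write the Radon--Nikodym derivative of $\PATHS$ with respect to $\PATHSAPP$ along a trajectory that begins at $X_0$ and has jumps at $0 < t_1 < \cdots < t_n \le T$. It factorizes into the ratio of initial densities, a product of transition-rate ratios over the realized jumps, and an exponential survival correction assembled from the total rates,
\begin{equation*}
\frac{d\PATHS}{d\PATHSAPP} = \frac{\INIT(X_0)}{\INITAPP(X_0)}\,
\exp\!\left( \sum_{k:\, t_k \le T} \log\frac{c^\theta(X_{t_k-},X_{t_k})}{c^{\theta+\epsilon}(X_{t_k-},X_{t_k})}
- \int_0^T \big(\lambda^\theta(X_s)-\lambda^{\theta+\epsilon}(X_s)\big)\,ds \right).
\end{equation*}
The absolute-continuity hypothesis $c^\theta(x,x')=0 \Leftrightarrow c^{\theta+\epsilon}(x,x')=0$ guarantees that this density is well defined.

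Taking logarithms and integrating against $\PATHS$ decomposes $\RELENT{\PATHS}{\PATHSAPP}$ into three expectations. The initial-density term is exactly $\RELENT{\INIT}{\INITAPP}$ because $X_0\sim\INIT$, while the survival term is already an integral in time that Fubini turns into $-\int_0^T \EXPECT_{Q_{0:s}^{\theta}}\big[\lambda^\theta(X_s)-\lambda^{\theta+\epsilon}(X_s)\big]\,ds$. The crux is the jump-sum term: I would invoke the compensation formula (the L\'evy-system, or Dynkin, identity) for the jump counting measure, whose compensator is $c^\theta(X_{s-},x')\,ds$, to rewrite the expected sum over realized jumps as $\int_0^T \EXPECT_{Q_{0:s}^{\theta}}\big[\sum_{x'} c^\theta(X_s,x')\log\frac{c^\theta(X_s,x')}{c^{\theta+\epsilon}(X_s,x')}\big]\,ds$. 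Using $c^\theta=\lambda^\theta p^\theta$ and absorbing the $p^\theta$ weighting into the path expectation over the next-jump destination recovers the $\lambda^\theta(X_{t-})\log(c^\theta/c^{\theta+\epsilon})$ form, so that collecting the three pieces yields \VIZ{Rel:entr:CTMC} and identifies the integrand with the instantaneous relative entropy \VIZ{inst:Rel:entr:CTMC}. The main obstacle is exactly the justification of this compensation step: one must check the integrability that makes the compensated jump sum a true martingale, so that its expectation vanishes, and that allows the interchange of expectation with the time integral. This is also where the notationally ambiguous $X_t$ in \VIZ{inst:Rel:entr:CTMC} acquires its precise meaning as the virtual jump destination drawn from $p^\theta(X_{t-},\cdot)$; I would control it by standard localization and stopping-time arguments under the positivity and smoothness assumptions on the rates.

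For part (b) I would Taylor-expand the instantaneous relative entropy \VIZ{inst:Rel:entr:CTMC} in $\epsilon$, setting $\delta c = c^{\theta+\epsilon}-c^\theta = \epsilon^T\nabla_\theta c^\theta + O(|\epsilon|^2)$ and $\delta\lambda=\sum_{x'}\delta c$. Expanding $\log(c^\theta/c^{\theta+\epsilon}) = -\delta c/c^\theta + \tfrac12(\delta c/c^\theta)^2 + \cdots$ and multiplying by $c^\theta$ summed over $x'$, the first-order contribution $-\sum_{x'}\delta c = -\delta\lambda$ cancels exactly against the survival correction $-(\lambda^\theta-\lambda^{\theta+\epsilon})=\delta\lambda$. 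This cancellation is the continuous-time analog of the identity $\int_E \delta p\,dx'=0$ from the discrete-time proof and confirms that the gradient of the relative entropy vanishes at $\epsilon=0$. The surviving quadratic term is $\tfrac12\sum_{x'}(\delta c)^2/c^\theta = \tfrac12\,\epsilon^T\big[\sum_{x'} c^\theta\,\nabla_\theta\log c^\theta(\nabla_\theta\log c^\theta)^T\big]\epsilon + O(|\epsilon|^3)$; taking the path expectation and matching with $\tfrac12\epsilon^T\FISHERR{\IPATHS{t}}\epsilon$ delivers \VIZ{inst:FIM:CTMC}. Finally, since the pathwise FIM is the Hessian of \VIZ{Rel:entr:CTMC} at $\theta$, differentiating the decomposition term by term gives \VIZ{path:FIM:CTMC}, with $\FISHER{\INIT}$ the Hessian of $\RELENT{\INIT}{\INITAPP}$.
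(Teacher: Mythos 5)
Your proposal follows essentially the same route as the paper: the Girsanov change of measure for jump processes, the compensation (martingale) identity to turn the expected sum over jumps into a time integral weighted by $\lambda^{\theta}(X_{t-})$, and for part (b) the logarithmic expansion in $\delta c$ with the first-order term cancelling against the total-rate difference before the Taylor expansion in $\epsilon$. If anything you are slightly more explicit than the paper about the integrability needed for the compensated jump sum to be a true martingale and about the first-order cancellation, but the argument is the same.
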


\begin{proof}
(a) As in the discrete-time case, the key element is an explicit formula for the Radon-Nikodym derivative.
The Radon-Nikodym derivative of the path distribution $\PATHS$ w.r.t.  the perturbed
path distribution $\PATHSAPP$ has an explicit formula known also as Girsanov formula
\cite{Liptser:77, Kipnis:99},
\begin{equation*}\label{RN:MP}
\begin{aligned}
&\frac{d\PATHS}{d\PATHSAPP}  (\{X_t\}_{t=0}^T) =  \frac{\INIT(X_0)}{\INITAPP(X_0)}
\exp \left\{ \int_0^T \log \frac{c^\theta(X_{t-},X_t)}{c^{\theta+\epsilon}(X_{t-},X_t)}dN_t
- \int_{0}^{T} [\lambda^\theta(X_t) - \lambda^{\theta+\epsilon}(X_t)]\,dt  \right\} \ ,
\end{aligned}
\end{equation*}
where $\INIT$ (reps. $\INITAPP$) is the initial distributions  of $\{X_t\}_{t\in\mathbb R_+}$
(resp. $\{\bar{X}_t\}_{t\in\mathbb R_+}$) while $N_t$ is the counting  measure, i.e. counts the number of  jumps in the process up to time $t$.
Using the Girsanov formula, the pathwise relative entropy is rewritten as {\small
\begin{equation*}
\begin{aligned}
&\RELENT{\PATHS}{\PATHSAPP} \\
&= \mathbb E_{\PATHS} \left[ \log \frac{\INIT(X_0)}{\INITAPP(X_0)}
+\int_0^T \log \frac{c^\theta(X_{t-},X_t)}{c^{\theta+\epsilon}(X_{t-},X_t)}\,dN_t
- \int_{0}^{T} [\lambda^\theta(X_t) - \lambda^{\theta+\epsilon}(X_t)]\,dt \right] \\
&= \mathbb E_{\PATHS} \left[ \log \frac{\INIT(X_0)}{\INIT(X_0)} \right]
+ \mathbb E_{\PATHS} \left[ \int_0^T \log \frac{c^\theta(X_{t-},X_t)}{c^{\theta+\epsilon}(X_{t-},X_t)}\,dN_t \right] - \ldots \\
&\hspace{170pt} \ldots - \mathbb E_{\PATHS} \left[ \int_{0}^{T} [\lambda^\theta(X_t) - \lambda^{\theta+\epsilon}(X_t)]\,dt \right] \ .
\end{aligned}
\end{equation*}}
\noindent
Exploiting the fact that the process $M_T:=N_T-\int_0^t \lambda^{\theta}(X_{t})dt$
is a martingale, we have that {\small
\begin{equation*}
\mathbb E_{\PATHS} \left[ \int_0^T \log \frac{c^\theta(X_{t-},X_t)}{ac{\theta+\epsilon}(X_{t-},X_t)}\,dN_t \right]
= \mathbb E_{\PATHS} \left[ \int_0^T \lambda^{\theta}(X_{t-})
\log \frac{c^\theta(X_{t-},X_t)}{c^{\theta+\epsilon}(X_{t-},X_t)}\,dt \right] \ .
\end{equation*}}
Thus, the pathwise relative entropy is rewritten as {\small
\begin{equation*}
\begin{aligned}
&\RELENT{\PATHS}{\PATHSAPP} \\
&= \RELENT{\INIT}{\INITAPP} + \mathbb E_{\PATHS}\left[ \int_{0}^{T} \lambda^{\theta}(X_{t-})
\log \frac{c^\theta(X_{t-}, X_{t})}{c^{\theta+\epsilon}(X_{t-}, X_{t})}
- \big(\lambda^\theta(X_{t}) - \lambda^{\theta+\epsilon}(X_{t})\big)\,dt \right] \\
&= \RELENT{\INIT}{\INITAPP} + \int_{0}^{T} \mathbb E_{\PATHS}\left[ \lambda^{\theta}(X_{t-})
\log \frac{c^\theta(X_{t-}, X_{t})}{c^{\theta+\epsilon}(X_{t-}, X_{t})}
- \big(\lambda^\theta(X_{t}) - \lambda^{\theta+\epsilon}(X_{t})\big)\right]\,dt \\
%&= \RELENT{\INIT}{\INITAPP}
%+ \int_{0}^{T} \mathbb E_{Q_{0:t}^{\theta}}\left[ \lambda^{\theta}(X_{t-})
%\log \frac{c^\theta(X_{t-}, X_{t})}{c^{\theta+\epsilon}(X_{t-}, X_{t})}
%- \big(\lambda^\theta(X_{t}) - \lambda^{\theta+\epsilon}(X_{t})\big)\right]\,dt \\
&= \RELENT{\INIT}{\INITAPP} + \int_{0}^{T} \RELENTR{\IPATHS{t}}{\IPATHSAPP{t}} \,dt \ ,
\end{aligned}
\end{equation*}}
%\noindent
%where we use in the last equation the fact that
%\begin{equation*}
%\begin{aligned}
%\mathbb E_{\PATHS} \left[f(X_{t})\right] &=
%\sum_{n\ge0}\sum_{x_1\ldots x_{n+1}} \int_{\mathbb R_+^{n+1}} f(x_k) \chi\big\{ T_k\le t < T_{k+1} \big\}
%\prod_{i=0}^n P(\xi_i,\xi_{i+1}) d\tau_0 \ldots d\tau_{n+1} \\
%&= \sum_{k\ge0}\sum_{x_1\ldots x_{k+1}} \int_{\mathbb R_+^{n+1}} f(x_k) \chi\big\{ T_k\le t < T_{k+1} \big\}
%\prod_{i=0}^k P(\xi_i,\xi_{i+1}) d\tau_0\ldots d\tau_{k+1} \\
%&= \mathbb E_{Q_{0:t}^{\theta}} \left[f(X_{t})\right]
%\end{aligned}
%\end{equation*}
%
where the instantaneous relative entropy is defined as
\begin{equation*}
\RELENTR{\IPATHS{t}}{\IPATHSAPP{t}} = \mathbb E_{Q_{0:t}^{\theta}}\left[ \lambda^{\theta}(X_{t-})
\log \frac{c^\theta(X_{t-}, X_{t})}{c^{\theta+\epsilon}(X_{t-}, X_{t})}
- \big(\lambda^\theta(X_{t}) - \lambda^{\theta+\epsilon}(X_{t})\big)\right] \ .
\end{equation*}

\noindent
(b) Even though not directly evident from \VIZ{inst:Rel:entr:CTMC}, the instantaneous relative entropy
for continuous-time Markov chains is locally a quadratic function of the parameter vector
$\theta$. Indeed, defining the rate difference $\delta c(x,x') = c^{\theta+\epsilon}(x,x')
- c^\theta(x,x')$, the instantaneous relative entropy can be rewritten as
\begin{equation*}
\begin{aligned}
&\RELENTR{\IPATHS{t}}{\IPATHSAPP{t}} \\
&= - \EXPECT_{Q_{0:t}^{\theta}}\left[ \lambda^{\theta}(X_{t-})
\log \left( 1 + \frac{\delta c(X_{t-}, X_{t})}{c^{\theta}(X_{t-}, X_{t})}\right) \right]
- \EXPECT_{Q_{0:t}^{\theta}}\left[ \lambda^\theta(X_{t}) - \lambda^{\theta+\epsilon}(X_{t})\right] \\
&= - \EXPECT_{Q_{0:t}^{\theta}}\left[ \lambda^{\theta}(X_{t-})
\left( \frac{\delta c(X_{t-}, X_{t})}{c^{\theta}(X_{t-}, X_{t})} - \frac{1}{2} \left(\frac{\delta c(X_{t-}, X_{t})}{c^{\theta}(X_{t-}, X_{t})}\right)^2
+ O(|\delta c(X_{t-},X_t)|^3) \right)\right] \\
&+ \EXPECT_{Q_{0:t}^{\theta}}\left[ \lambda^{\theta+\epsilon}(X_{t}) - \lambda^\theta(X_{t})\big)\right] \\
%&= .....  \\
&= \frac{1}{2} \EXPECT_{Q_{0:t}^{\theta}}\left[ \lambda^{\theta}(X_{t-}) \right(\frac{\delta c(X_{t-}, X_{t})}{c^{\theta}(X_{t-}, X_{t})}\left)^2 \right]
+ O(|\delta c|^3) \\
\end{aligned}
\end{equation*}
Under smoothness assumption on the transition rates in a neighborhood of parameter
vector $\theta$ a Taylor series expansion of $\delta c(x,x') = \epsilon^T \nabla_\theta c^\theta(x,x') + O(|\epsilon|^2)$
results in
\begin{equation*}
\begin{aligned}
&\RELENTR{\IPATHS{t}}{\IPATHSAPP{t}} \\
&= \frac{1}{2} \EXPECT_{Q_{0:t}^{\theta}}\left[ \lambda^{\theta}(X_{t-}) \left(\frac{\delta c(X_{t-}, X_{t})}{c^{\theta}(X_{t-}, X_{t})}\right)^2 \right]
+ O(|\delta c|^3) \\
&= \frac{1}{2} \epsilon^T \EXPECT_{Q_{0:t}^{\theta}}\left[ \lambda^{\theta}(X_{t-})
\frac{\nabla_\theta c^\theta(X_{t-}, X_{t})\nabla_\theta c^\theta(X_{t-}, X_{t})^T}{c^{\theta}(X_{t-}, X_{t})^2} \right] \epsilon
+ O(|\epsilon|^3) \\
&= \frac{1}{2} \epsilon^T \FISHERR{\IPATHS{t}} \epsilon + O(|\epsilon|^3)
\end{aligned}
\end{equation*}
where 
\begin{equation*}
\FISHERR{\IPATHS{t}} = 
\mathbb E_{Q_{0:t}^{\theta}}\left[ \lambda^{\theta}(X_{t-})
\nabla_\theta \log c^\theta(X_{t-}, X_{t}) \nabla_\theta \log c^{\theta}(X_{t-}, X_{t})^T \right] \ ,
\end{equation*}
is the instantaneous FIM. Finally, the pathwise FIM is obtained from a straightforward expansion
of each element of the pathwise relative entropy in terms of $\epsilon$. It is given by
\begin{equation*}
\FISHER{\PATHS} = \FISHER{\INIT}  + \int_{0}^{T} \FISHERR{\IPATHS{t}} dt\ ,
\end{equation*}
where $\FISHER{\INIT}$ is the FIM of the initial distribution while
$\FISHERR{\IPATHS{t}}$ is the instantaneous pathwise FIM computed above.
\end{proof}

\medskip
\noindent
{\bf Stationary regime}: In the stationary regime, the instantaneous relative entropy is a constant function of time
since at each time instant the distribution of the states is the --typically unknown-- stationary distribution.
The following corollary presents explicit formulas for the relative entropy rate and the associated FIM at
the stationary regime.

\begin{corollary}
The relative entropy rate  is equal to the ergodic average
\begin{equation}
\RELENTR{\IPATHS{}}{\IPATHSAPP{}} = \EXPECT_{\EQUIL} \Big[\sum_{x'\in E} c^{\theta}(x, x')
\log \frac{c^\theta(x, x')}{c^{\theta+\epsilon}(x, x')} - (\lambda^{\theta}(x) - \lambda^{\theta+\epsilon}(x)) \Big] \ , 
\label{RER:CTMC}
\end{equation}
while the FIM of the relative entropy rate, computed as its Hessian, has explicit formula given by
\begin{equation}
\FISHERR{\IPATHS{}} = \EXPECT_{\EQUIL}\left[ \sum_{x'\in E} c^\theta(x,x')
\nabla_\theta \log c^\theta(x,x') \nabla_\theta \log c^\theta(x,x')^T \right] \ .
\label{RER:FIM:CTMC}
\end{equation}
\end{corollary}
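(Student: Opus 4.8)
The plan is to reduce the path-space expectation appearing in the instantaneous relative entropy \VIZ{inst:Rel:entr:CTMC} to a static average against the stationary measure $\EQUIL$, exactly in the spirit of the discrete-time corollary. First I would take the initial distribution to be the stationary one, $\INIT = \EQUIL$, so that by invariance the one-dimensional marginal of $\PATHS$ at every time $t$ (hence also the left limit $X_{t-}$) is distributed according to $\EQUIL$. Consequently the integrand of the decomposition \VIZ{Rel:entr:CTMC} ceases to depend on $t$, the instantaneous relative entropy $\RELENTR{\IPATHS{t}}{\IPATHSAPP{t}}$ reduces to a time-independent constant $r$, and it remains only to evaluate that constant.

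The key step is to read the term $\EXPECT_{Q_{0:t}^{\theta}}[\lambda^{\theta}(X_{t-})\log\frac{c^\theta(X_{t-},X_{t})}{c^{\theta+\epsilon}(X_{t-},X_{t})}]$ correctly. As was already used implicitly in the martingale argument of the preceding theorem, this quantity originates from integrating $\log\frac{c^\theta}{c^{\theta+\epsilon}}$ against the counting measure $N_t$, and its predictable compensator replaces each jump by a sum over destinations weighted by the rates. Writing the embedded jump law as $p^\theta(x,\cdot) = c^\theta(x,\cdot)/\lambda^\theta(x)$ and averaging the post-jump state $X_t$ conditioned on $X_{t-}=x$, the product $\lambda^\theta(x)\,\EXPECT[\log\frac{c^\theta(x,X_t)}{c^{\theta+\epsilon}(x,X_t)}\mid X_{t-}=x]$ becomes $\sum_{x'\in E} c^\theta(x,x')\log\frac{c^\theta(x,x')}{c^{\theta+\epsilon}(x,x')}$. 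The remaining summand $\lambda^\theta(X_t)-\lambda^{\theta+\epsilon}(X_t)$ is simply a pointwise function of the current state. Taking the outer expectation over $X_{t-}\sim\EQUIL$ (equivalently $X_t\sim\EQUIL$ in the drift term) then yields \VIZ{RER:CTMC}.

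For the Fisher information \VIZ{RER:FIM:CTMC} I would invoke the local quadratic expansion established in part (b) of the theorem: since $\RELENTR{\IPATHS{t}}{\IPATHSAPP{t}} = \tfrac12\,\epsilon^T\FISHERR{\IPATHS{t}}\epsilon + O(|\epsilon|^3)$ for an arbitrary time-$t$ marginal, the Hessian of the stationary relative entropy rate at $\epsilon=0$ is obtained by performing the same destination-averaging on \VIZ{inst:FIM:CTMC}, replacing $\lambda^\theta(X_{t-})\,\nabla_\theta\log c^\theta(X_{t-},X_t)\,\nabla_\theta\log c^\theta(X_{t-},X_t)^T$ by $\sum_{x'}c^\theta(x,x')\,\nabla_\theta\log c^\theta(x,x')\,\nabla_\theta\log c^\theta(x,x')^T$ and taking the expectation over $\EQUIL$. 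This is precisely \VIZ{RER:FIM:CTMC}, and it coincides with differentiating the stationary formula \VIZ{RER:CTMC} twice in $\epsilon$.

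Finally I would confirm consistency with the limit definition of the relative entropy rate. Because the instantaneous relative entropy equals the constant $r$ throughout the stationary regime, the decomposition \VIZ{Rel:entr:CTMC} gives $\RELENT{\PATHS}{\PATHSAPP} = \RELENT{\INIT}{\INITAPP} + T\,r$; dividing by $T$ and letting $T\to\infty$ kills the fixed initial contribution and returns $r$, so the stationary instantaneous relative entropy indeed equals the relative entropy rate. The only genuinely delicate point is the compensator reduction of the second paragraph: one must justify that at a fixed deterministic time no jump occurs almost surely, so that $\EXPECT_{Q_{0:t}^{\theta}}[\lambda^\theta(X_{t-})\log(\cdots)]$ is to be understood through the intensity of the jump-marked point process rather than literally evaluated on the event $X_{t-}=X_t$. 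Granting that reading, the whole corollary is a direct substitution of $\EQUIL$ into the formulas of the theorem.
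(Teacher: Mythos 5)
Your proof is correct and takes essentially the same route as the paper's, which simply substitutes the stationary measure $\EQUIL$ into the theorem's formulas \VIZ{inst:Rel:entr:CTMC} and \VIZ{inst:FIM:CTMC} and rewrites the path-space expectation as the rate-weighted sum over destination states $x'$. Your explicit justification of the compensator reading of $\EXPECT_{Q_{0:t}^{\theta}}\big[\lambda^{\theta}(X_{t-})\log(\cdots)\big]$, together with the consistency check against the limit definition of the relative entropy rate, is more careful than the paper's one-line substitution, which leaves both points implicit.
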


\begin{proof}
At the stationary regime, the instantaneous relative entropy is rewritten as
\begin{equation}
\begin{aligned}
&\RELENTR{\IPATHS{t}}{\IPATHSAPP{t}} \\
&= \mathbb E_{Q_{0:t}^{\theta}}\left[ \lambda^{\theta}(X_{t-})
\log \frac{c^\theta(X_{t-}, X_{t})}{c^{\theta+\epsilon}(X_{t-}, X_{t})}
- \big(\lambda^\theta(X_{t}) - \lambda^{\theta+\epsilon}(X_{t})\big)\right] \\
%&= ... \\
&= \EXPECT_{\EQUIL} \Big[\sum_{x'\in E} c^{\theta}(x, x')
\log \frac{c^\theta(x, x')}{c^{\theta+\epsilon}(x, x')} - (\lambda^{\theta}(x) - \lambda^{\theta+\epsilon}(x)) \Big]\, .
\end{aligned}
\end{equation}

\end{proof}

\section{Stochastic differential equations - Markov processes}
Consider a Markov process $X_t\in\mathbb R^d$ driven by a stochastic differential equation of the form
\begin{equation}
 \left\{
 \begin{array}{l}
	dX_t = b^\theta(X_t) dt + \sigma(X_t) d{W}_t \\
	X_0 \sim \INIT
\end{array}  \right.
\label{sde:def}
\end{equation}
where $b^\theta(\cdot)$ is the drift function depending on the parameter vector $\theta$, $\sigma(\cdot)$
is the state-dependent diffusion matrix, ${W}_t$ is a $d$-dimensional Brownian motion while $\INIT$ is the
initial distribution of the process. Let $\PATHS$ denote the path space distribution for a specific parameter
vector $\theta$. Consider also a perturbation vector, $\epsilon \in \mathbb{R}^{K}$, and denote by $\PATHSAPP$
the path space distribution of the perturbed process, $\bar{X}_t$ driven by the stochastic differential
equation \VIZ{sde:def} with perturbed drift function.

Under appropriate assumption on the components of the stochastic differential equation,
the pathwise relative entropy can be decomposed as in the previous cases and an explicit
formula for the instantaneous relative entropy can be estimated as the following theorem
asserts.

\begin{theorem}
(a) Assume that the diffusion matrix, $\sigma(x)$, is invertible for all $x\in\mathbb R^d$ and
\begin{equation*}
\mathbb E_{\PATHS} [\exp\big\{\int_0^T |\sigma^{-1}(X_t)(b^{\theta+\epsilon}(X_t) - b^\theta(X_t))|^2  \big\}]<\infty \quad  \textrm{(Novikov condition)} \ .
\end{equation*}
Then, the pathwise relative entropy for the above-defined Markov process
is decomposed as
\begin{equation}
\RELENT{\PATHS}{\PATHSAPP} = \RELENT{\INIT}{\INITAPP} +  \int_0^T \RELENTR{\IPATHS{t}}{\IPATHSAPP{t}} dt \ ,
\label{Rel:entr:SDE}
\end{equation}
where the instantaneous relative entropy is equal to
\begin{equation}
\RELENTR{\IPATHS{t}}{\IPATHSAPP{t}} = \frac{1}{2} \mathbb E_{\nu_t^\theta} \left[ \big|\sigma^{-1}(x)\big(b^{\theta+\epsilon}(x) - b^\theta(x)\big)\big|^2 \right] \ .
\label{inst:Rel:entr:SDE}
\end{equation}
(b) Assume further that the drift function $b^\theta(\cdot)$ is smooth w.r.t. the parameter vector $\theta$.
Then, the pathwise FIM has a similar decomposition and the instantaneous FIM is given by
\begin{equation}
\FISHERR{\IPATHS{t}} = \mathbb E_{\nu_t^{\theta}}\left[
\big[\nabla_{\theta}b^{\theta}(x)^T(\sigma(x)\sigma(x)^T)^{-1}\nabla_{\theta}b^{\theta}(x) \right] \ .
\label{inst:FIM:SDE}
\end{equation}
where $\nabla_{\theta}b^{\theta}(\cdot)$ is a $d\times K$ matrix containing all the first-order partial
derivatives of the drift vector (i.e., the Jacobian matrix).
\label{theorem:SDE}
\end{theorem}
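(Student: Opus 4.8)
The plan is to follow the same route as the continuous-time Markov chain case: produce an explicit Girsanov--type Radon--Nikodym derivative between the two path measures, take its logarithm and $\PATHS$-expectation to split off the initial term and a time integral, and then Taylor-expand the resulting instantaneous relative entropy in $\epsilon$ to identify the instantaneous FIM. Since $\PATHS$ and $\PATHSAPP$ are laws of SDEs driven by the same Brownian motion with the same (invertible) diffusion matrix $\sigma$ and differing only through their drifts, Girsanov's theorem applies directly. Writing $h(x) = \sigma^{-1}(x)\big(b^{\theta+\epsilon}(x) - b^\theta(x)\big)$, the process $W_t^{\theta+\epsilon} := W_t^\theta - \int_0^t h(X_s)\,ds$ is a Brownian motion under $\PATHSAPP$, and the Radon--Nikodym derivative is
\begin{equation*}
\frac{d\PATHS}{d\PATHSAPP}\big(\{X_t\}_{t=0}^T\big) = \frac{\INIT(X_0)}{\INITAPP(X_0)}\,\exp\left\{ -\int_0^T h(X_t)^T\,dW_t^\theta + \frac{1}{2}\int_0^T |h(X_t)|^2\,dt \right\}\, ,
\end{equation*}
where the Novikov condition in the hypothesis guarantees that the associated exponential is a true martingale, so that $\PATHSAPP$ is a genuine probability measure absolutely continuous with respect to $\PATHS$.

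Taking logarithms and integrating against $\PATHS$ yields three terms,
\begin{equation*}
\RELENT{\PATHS}{\PATHSAPP} = \EXPECT_{\PATHS}\!\left[\log\tfrac{\INIT(X_0)}{\INITAPP(X_0)}\right] - \EXPECT_{\PATHS}\!\left[\int_0^T h(X_t)^T dW_t^\theta\right] + \frac{1}{2}\,\EXPECT_{\PATHS}\!\left[\int_0^T |h(X_t)|^2 dt\right]\, .
\end{equation*}
The first term is exactly $\RELENT{\INIT}{\INITAPP}$. The essential observation is that under $\PATHS$ the process $W^\theta$ is a Brownian motion, so the stochastic integral $\int_0^T h(X_t)^T dW_t^\theta$ is a zero-mean martingale and the middle term vanishes. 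Applying Fubini to the final term and noting that $\EXPECT_{\PATHS}[|h(X_t)|^2]$ depends on the path only through the time-$t$ marginal law $\nu_t^\theta$ of $X_t$, one obtains the decomposition \VIZ{Rel:entr:SDE} with the instantaneous relative entropy $\RELENTR{\IPATHS{t}}{\IPATHSAPP{t}} = \tfrac{1}{2}\EXPECT_{\nu_t^\theta}[|\sigma^{-1}(x)(b^{\theta+\epsilon}(x) - b^\theta(x))|^2]$, proving part (a).

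For part (b), I would expand in $\epsilon$ exactly as in the CTMC proof. Smoothness of $b^\theta$ gives $b^{\theta+\epsilon}(x) - b^\theta(x) = \nabla_\theta b^\theta(x)\,\epsilon + O(|\epsilon|^2)$ with $\nabla_\theta b^\theta(x)$ the $d\times K$ Jacobian, hence, using $(\sigma^{-1})^T\sigma^{-1} = (\sigma\sigma^T)^{-1}$,
\begin{equation*}
\big|\sigma^{-1}(x)\big(b^{\theta+\epsilon}(x) - b^\theta(x)\big)\big|^2 = \epsilon^T \nabla_\theta b^\theta(x)^T (\sigma(x)\sigma(x)^T)^{-1}\nabla_\theta b^\theta(x)\,\epsilon + O(|\epsilon|^3)\, .
\end{equation*}
Substituting into \VIZ{inst:Rel:entr:SDE} and matching with the quadratic form \VIZ{GFIM0} identifies the instantaneous FIM \VIZ{inst:FIM:SDE}; the pathwise FIM decomposition then follows term by term from \VIZ{Rel:entr:SDE}, with $\FISHER{\INIT} = \EXPECT_{\INIT}[\nabla_\theta\log\INIT(x)\,\nabla_\theta\log\INIT(x)^T]$ the FIM of the initial distribution.

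The main obstacle is the rigorous justification of the Girsanov step rather than the algebra: one must verify that the exponential supermartingale is in fact a true martingale---which is precisely what the Novikov hypothesis delivers---so that $\PATHSAPP$ is well defined and $\log(d\PATHS/d\PATHSAPP)$ is $\PATHS$-integrable, and one must confirm that $\int_0^T h(X_t)^T dW_t^\theta$ is a genuine (not merely local) martingale so that its expectation is zero. Once these integrability facts are secured, the interchange of expectation and time integration and the reduction from the path expectation to the marginal $\nu_t^\theta$ are routine.
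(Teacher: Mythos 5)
Your proposal is correct and follows essentially the same route as the paper: the Girsanov/Radon--Nikodym formula for $d\PATHS/d\PATHSAPP$, vanishing of the stochastic-integral term by the martingale property under $\PATHS$, Fubini to reduce to the marginal law $\nu_t^\theta$, and a Taylor expansion in $\epsilon$ together with $(\sigma^{-1})^T\sigma^{-1}=(\sigma\sigma^T)^{-1}$ for the FIM. Your closing remarks on Novikov guaranteeing that the exponential is a true martingale and that $\int_0^T h(X_t)^T dW_t^\theta$ has zero mean are in fact slightly more careful than the paper's own treatment.
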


\begin{proof}
(a) The inversion of the diffusion matrix is a necessary assumption for the well-poshness of the
Novikov condition which in turn suffices for the two path distributions $\PATHS$ and $\PATHSAPP$
to be absolutely continuous w.r.t. each other \cite{Oksendal:00}. Additionally, the Girsanov theorem
provides an explicit formula of the Radon-Nikodym derivative \cite{Oksendal:00} which is given by
\begin{equation*}
\frac{d\PATHS}{d\PATHSAPP} \Big(\big\{(X_t)\big\}_{t=0}^T \Big)
= \frac{d\INIT}{d\INITAPP}(X_0) \exp\left\{-\int_0^T u(X_t)^TdW_t - \frac{1}{2}\int_0^T |u(X_t)|^2 dt \right\} \ ,
\end{equation*}
where $u(x) = \sigma^{-1}(x)\big(b^{\theta+\epsilon}(x) - b^\theta(x)\big)$. 
Furthermore, it holds that 
\begin{equation*}
\hat{W}_t:=\int_0^t u(X_s)dt + W_t \ ,
\end{equation*}
is a Brownian motion w.r.t. the unperturbed path distribution $\PATHS$, meaning that, for any measurable function $f(\cdot)$,
it holds $\mathbb E_{\PATHS} \big[\int_0^T f(X_t)^Td\hat{W}_t\big] = 0$. Then,
\begin{equation*}
\begin{aligned}
&\RELENT{\PATHS}{\PATHSAPP} =
 \EXPECT_{\PATHS} \left[ \log \frac{d\INIT}{d\INIT}(X_0) - \int_0^T u(X_t)^TdW_t - \frac{1}{2}\int_0^T |u(X_t)|^2 dt \right] \\
&= \EXPECT_{\PATHS} \left[ \log \frac{d\INIT}{d\INIT}(X_0) \right]
- \EXPECT_{\PATHS} \left[ \int_0^T u(X_t)^Td\hat{W}_t \right]
+ \frac{1}{2} \EXPECT_{\PATHS} \left[ \int_0^T |u(X_t)|^2 dt \right] \\
&= \RELENT{\INIT}{\INITAPP} + \frac{1}{2} \EXPECT_{\PATHS} \left[ \int_0^T |u(X_t)|^2 dt \right] \\
&= \RELENT{\INIT}{\INITAPP} + \int_0^T \frac{1}{2} \EXPECT_{\PATHS} \left[  |u(X_t)|^2 \right] dt \\
&= \RELENT{\INIT}{\INITAPP} + \int_0^T \frac{1}{2} \mathbb E_{\nu_t^\theta} |u(X_t)|^2 dt
\end{aligned}
\end{equation*}
hence the instantaneous relative entropy is explicitly given by
\begin{equation*}
\RELENTR{\IPATHS{t}}{\IPATHSAPP{t}}
= \frac{1}{2} \mathbb E_{\nu_t^\theta} \big[ \big|\sigma^{-1}(x)\big(b^{\theta+\epsilon}(x) - b^\theta(x)\big)\big|^2 \big] 
\end{equation*}

\noindent
(b) Due to the smoothness assumption, a Taylor expansion of the drift function around the point
$\theta$ results in $b^{\theta+\epsilon}(x) - b^\theta(x) = \nabla_{\theta}b^{\theta}(x)\epsilon + O(|\epsilon|^2)$
where $\nabla_{\theta}b^{\theta}(\cdot)$ is a $d\times K$ matrix containing all the first-order partial
derivatives of the drift vector function (i.e., the Jacobian matrix).
Then, it is straightforward to obtain from \VIZ{inst:Rel:entr:SDE} to get
\begin{equation*}
\FISHERR{\IPATHS{t}} = \mathbb{E}_{\nu_t^\theta}
\big[\nabla_{\theta}b^{\theta}(x)^T(\sigma(x)\sigma(x)^T)^{-1}\nabla_{\theta}b^{\theta}(x)\big] \ .
\end{equation*}
\end{proof}

\medskip
\noindent
{\bf Stationary regime}:
In the stationary regime, the instantaneous relative entropy becomes a constant function of time
since the distribution of the process equals to the stationary distribution denoted by $\EQUIL$ for
all times. The following corollary presents explicit formulas for the relative entropy rate and the
associated FIM.

\begin{corollary}\label{propos:RER}
Under the same assumption of Theorem~\ref{theorem:SDE} and let $X_0\sim\EQUIL$.
Then, the relative entropy rate equals to
\begin{equation}
\RELENT{\IPATHS{}}{\IPATHSAPP{}} =  \frac{1}{2} \mathbb{E}_{\EQUIL}
\left[ \big|\sigma^{-1}(x)\big(b^{\theta+\epsilon}(x) - b^\theta(x)\big)\big|^2 \right] \ ,
\label{equil:RER:SDE}	
\end{equation}
while the FIM associated with the relative entropy rate is given by
\begin{equation}
\FISHERR{\IPATHS{}} = \mathbb{E}_{\EQUIL}[\nabla_{\theta}b^{\theta}(x)^T(\sigma(x)\sigma(x)^T)^{-1}\nabla_{\theta}b^{\theta}(x)] \ .
\label{equil:FIM:SDE}	
\end{equation}
\end{corollary}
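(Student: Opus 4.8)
The plan is to follow the template of the two preceding corollaries and simply specialize the instantaneous formulas of Theorem~\ref{theorem:SDE} to a process started from its stationary distribution. The only ingredient beyond parts (a) and (b) of that theorem is the invariance of $\EQUIL$ under the dynamics \VIZ{sde:def}: once $X_0 \sim \EQUIL$, the time-$t$ marginal satisfies $\nu_t^\theta = \EQUIL$ for every $t \ge 0$.

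First I would substitute $\EQUIL$ in place of $\nu_t^\theta$ in the instantaneous relative entropy \VIZ{inst:Rel:entr:SDE}, which produces the time-independent quantity
\begin{equation*}
\RELENTR{\IPATHS{t}}{\IPATHSAPP{t}} = \frac{1}{2}\mathbb{E}_{\EQUIL}\left[\big|\sigma^{-1}(x)\big(b^{\theta+\epsilon}(x) - b^\theta(x)\big)\big|^2\right] \ ,
\end{equation*}
and the same substitution into the instantaneous FIM \VIZ{inst:FIM:SDE} yields the claimed formula \VIZ{equil:FIM:SDE}. To pass from this constant instantaneous quantity to the relative entropy rate I would invoke its defining limit: by the decomposition \VIZ{Rel:entr:SDE} and the constancy just established, the pathwise relative entropy over $[0,T]$ equals $\RELENT{\EQUIL}{\EQUILAPP}$ plus $T$ times the constant integrand, so dividing by $T$ and letting $T \to \infty$ leaves exactly \VIZ{equil:RER:SDE}.

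The step requiring the most care is the vanishing of the boundary contribution $\frac{1}{T}\RELENT{\EQUIL}{\EQUILAPP}$ as $T \to \infty$. Since $\RELENT{\EQUIL}{\EQUILAPP}$ is a fixed constant independent of $T$, finite whenever the stationary distributions are mutually absolutely continuous as is implicit in the hypotheses of Theorem~\ref{theorem:SDE}, its time-average tends to zero and the remaining manipulations are routine. The FIM \VIZ{equil:FIM:SDE} then follows equivalently either by the substitution above or by taking the Hessian in $\theta$ of \VIZ{equil:RER:SDE} together with the expansion $b^{\theta+\epsilon}(x) - b^\theta(x) = \nabla_\theta b^\theta(x)\epsilon + O(|\epsilon|^2)$, exactly as in part (b) of the theorem.
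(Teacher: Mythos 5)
Your proposal is correct and follows essentially the same route the paper takes: the paper omits a proof for this corollary entirely, but its proofs of the analogous discrete-time and continuous-time Markov chain corollaries consist of exactly the substitution of $\EQUIL$ for the time-$t$ marginal in the instantaneous formulas, which is what you do. Your additional care with the limit $\frac{1}{T}\RELENT{\EQUIL}{\EQUILAPP}\to 0$ is a small rigor bonus the paper does not bother to spell out.
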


We finally remark that a popular method for modeling non-equilibrium systems in atomistic and
mesoscopic scales is based on the Langevin equation. Langevin equation is a degenerate system
of stochastic differential equations whose sensitivity analysis based on the relative entropy rate
and the associated pathwise FIM was performed in \cite{TPKH:2015}.

\section{Demonstration example} \label{demonstration}

In this section we give a numerical example of the pathwise relative entropy (\ref{pathwise:RE}) and pathwise FIM (\ref{pathwise:FIM}) for a continuous time Markov chain model. More specific, a biological reaction network is considered and the quantities instantaneous RE (\ref{inst:Rel:entr:CTMC}) and instantaneous FIM (\ref{inst:FIM:CTMC}) are presented as a function of time.

\subsection{Continuous time Markov chains: an EGFR model}\label{sec:egfr}

In \cite{Kholodenko:99},  Kholodenko et al.\ proposed a reaction network that describes signaling phenomena of mammalian cells \cite{Moghal:99, Hackel:99,Schoeberl:02}.
The reaction network consists of $N=23$ species and $M=47$ reactions. The propensity function for the $R_j$ reaction, $j=1,\ldots,47 \textrm{ and } j\neq 7,14,29$, obeys the law of mass action \cite{Distefano:13},
\begin{equation}\label{mass:action}
a_j(\STATE) = k_j \binom{\STATE_{A_j}}{\alpha_j} \binom{\STATE_{B_j}}{\beta_j},
\end{equation}
for a reaction of the general form ``$\alpha_j A_j + \beta_j B_j \xrightarrow{k_j} \ldots$'', where
$A_j$ and $B_j$ are the reactant species, $\alpha_j$ and $\beta_j$ are the respective number
of molecules needed for the reaction, $k_j$ the reaction constant and $\STATE_{A_j}$ and $\STATE_{B_j}$ is
the total number of species $A_j$ and $B_j$, respectively. The binomial coefficient
is defined by $\binom{n}{k}=\frac{n!}{k!(n-k)!}$. The propensity functions for reactions  $R_{7},R_{14},R_{29}$ are being described by the Michaelis--Menten kinetics, see \cite{Distefano:13},
\begin{equation}
a_j(\STATE) = V_j \STATE_{A_j} / \left(  K_j + \STATE_{A_j} \right), \quad j=7,14,29 \ ,
\end{equation}
where $V_j$ represents the maximum rate achieved by the system at maximum (saturating) substrate
concentrations while $K_j$ is the substrate concentration at which the reaction rate is half the maximum
value. The parameter vector contains all the reaction constants,
\begin{equation}
 \theta = [k_1,\ldots,k_6,k_8,\ldots,k_{13},k_{15},\ldots,k_{28},k_{30},\ldots,k_{47},V_7,K_7,V_{14},K_{14},V_{29},K_{29}]^T \ .
\end{equation}
In this study the values of the reaction constants are the same as in \cite{Kholodenko:99}.

For the initial data and parameters chosen in this study, the time series can be split into two regimes:  (a) a transient regime that approximately corresponds to the
time interval $[0,50]$ and (b) a stationary regime which approximately corresponds to the time interval $[50,\infty)$.

Next, we discuss two sensitivity measures  of the process $ \{X_s\}_{s=0}^t$: the instantaneous RE defined in (\ref{Rel:entr:CTMC}) 
\begin{equation}
\begin{aligned}
f \left( \{X_s\}_{s=0}^t \right) &=  \RELENTR{\IPATHS{t}}{\IPATHSAPP{t}} \\
&= \EXPECT_{\nu_t^\theta} \Big[\sum_{j=1}^M a_j^{\theta}({\bf x})\log \frac{a_j^{\theta}({\bf x})}{a_j^{\theta+\epsilon}({\bf x})}
-\big(a_0^{\theta}({\bf x}) - a_0^{\theta+\epsilon}({\bf x})\big) \Big],
\label{instanteneous:RE}
\end{aligned}
\end{equation}
with $t\in[0,T]$ and the averaged RE, defined as
\begin{equation}
g \left( \{X_s\}_{s=0}^t \right) = \frac{1}{t} \int_0^t \RELENTR{\IPATHS{s}}{\IPATHSAPP{s}} ds, \quad t\in[0,T]  \ .
\label{averaged:RE}
\end{equation}
In Figure (\ref{fig:IRE}) the two  sensitivity measures are presented for $T=100$. As expected, the averaged RE is smoother than the instantaneous RE while some of the qualitative characteristics remain. On the other hand, quantitative characteristics, such as the time that two instantaneous RE are crossed, are not preserved in the averaged RE. The averaged RE in the interval $[0,t]$ should be interpreted as a measure of the information accumulated in the the whole interval while the instantaneous RE is a measure of the information at the time instant $t$. Moreover, the observable can used as part of an upper bound for a different sensitivity measure, see  \cite{AKP:2015} for a detailed discussion.

\begin{figure}[!htb]
\begin{center}
\includegraphics[width=0.49\textwidth]{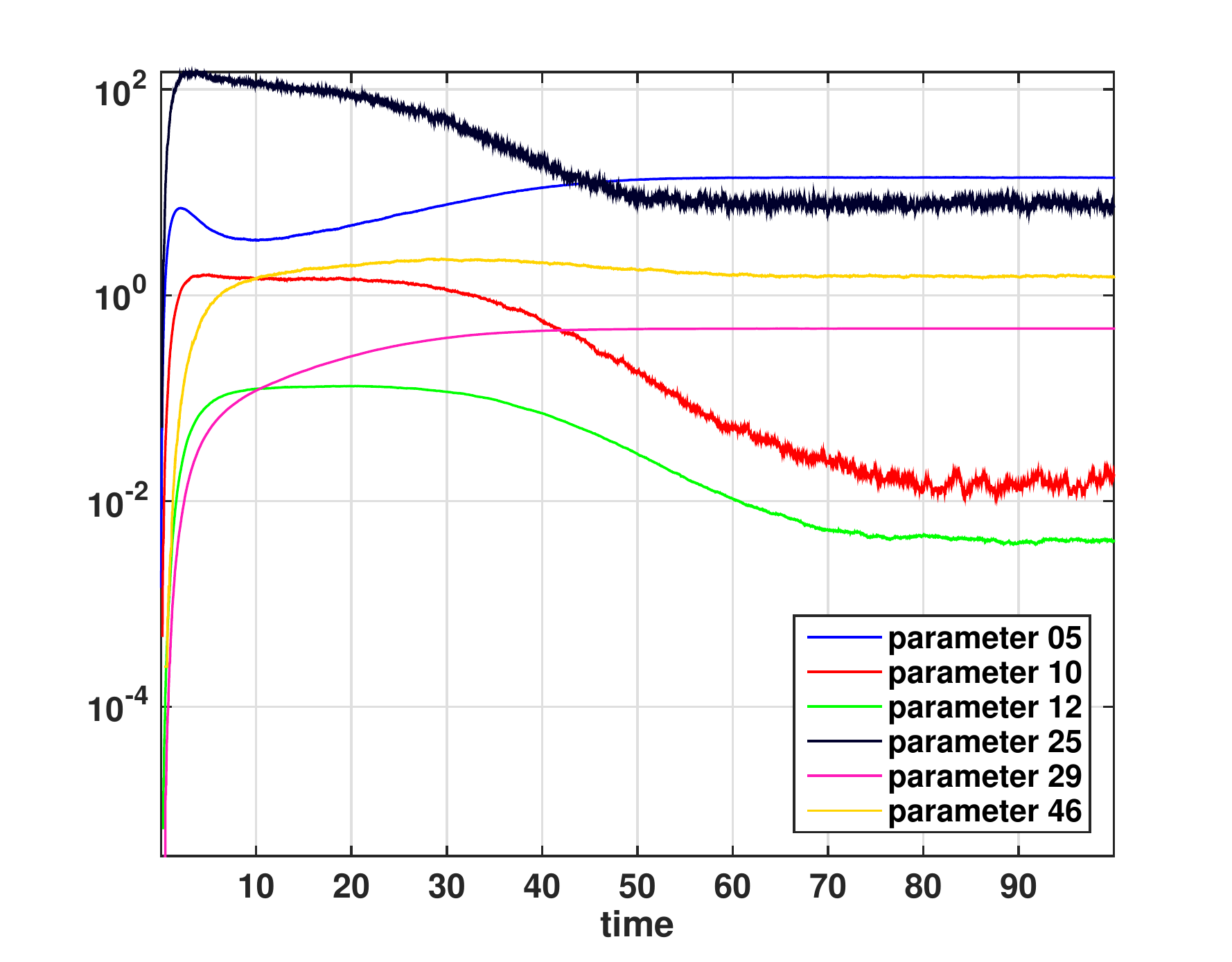}
\includegraphics[width=0.49\textwidth]{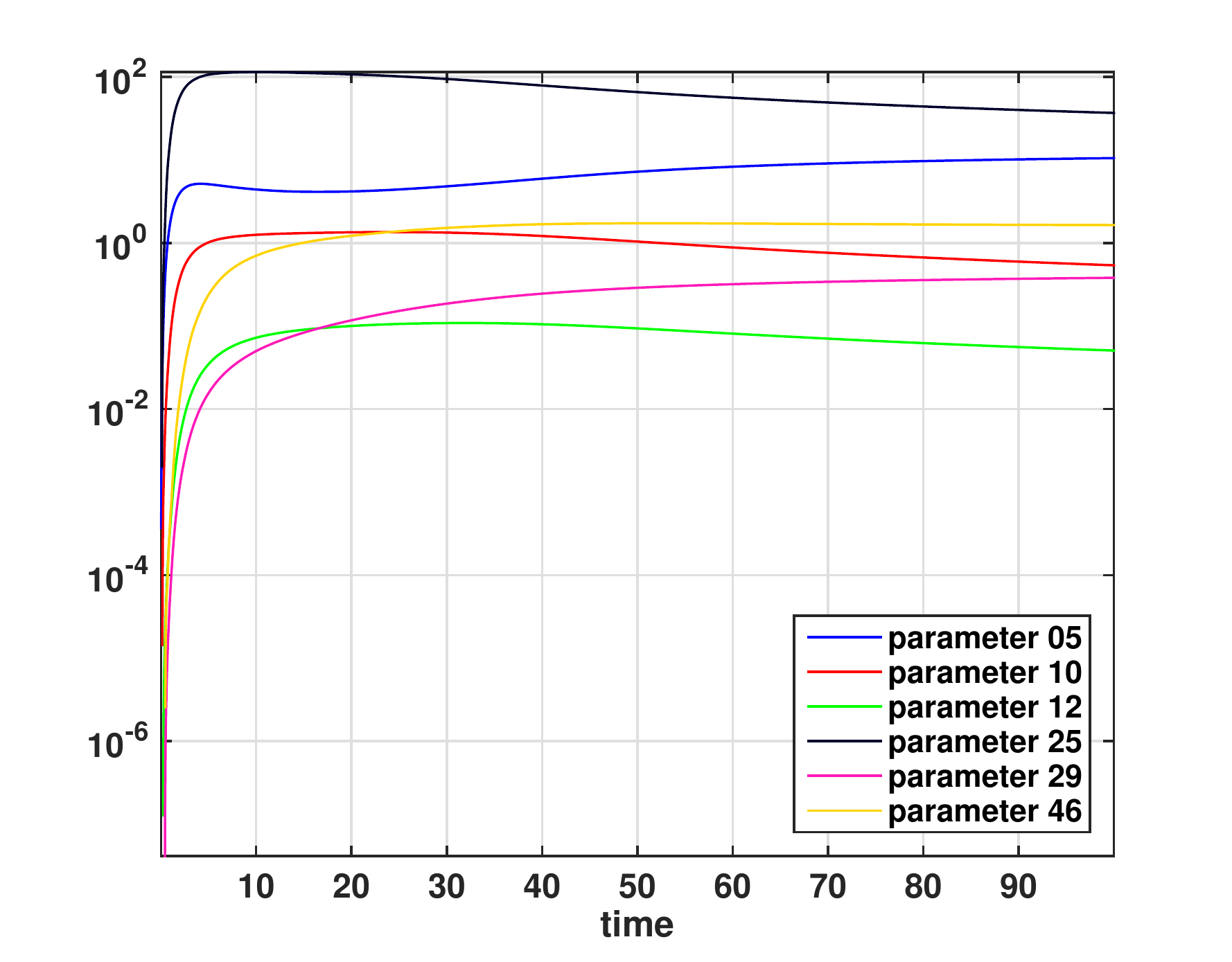}
\caption{The instantaneous RE, defined in (\ref{Rel:entr:CTMC}), for the EGFR model (left) and the averaged RE, defined in (\ref{averaged:RE}) (right).    }
\label{fig:IRE}
\end{center}
\end{figure}

Let us define a different sensitivity measure as the relative difference between the $k$-th species of two systems were the $\ell$-th parameter of the second is perturbed by $\epsilon'$,

\begin{equation}\label{SI}
S_{k,\ell,t} :=   \frac{ X_{k,t}^{\theta} - X_{k,t}^{\theta+\epsilon_\ell} }{ X_{k,t}^{\theta} } \ .
\end{equation}
where $\epsilon_\ell$ is a vector with zeros everywhere and $\epsilon'$ in the $\ell$-th  position. In the following examples the value of $\epsilon'$ for a perturbation in the $\ell$-th parameter is $\epsilon'=0.1\theta_\ell$. By summing over all species we obtain a total sensitivity measure, which is only indexed  by the parameter index and time,
\begin{equation}\label{total:SI}
S_{\ell,t} = \frac{1}{N} \sum_{k=1}^N S_{k,\ell,t} \ .
\end{equation}

 By observing  the first row of Figure \ref{fig:IREvsSENS}, which shows the instantaneous RE (\ref{inst:Rel:entr:CTMC}) for  $\ell=10$ and $\ell=29$, we learn that perturbations in the $10$-th parameter have large sensitivity for small times and as time varies the sensitivity is getting smaller. On the other hand, perturbations in the $29$-th parameter have small influence on the system for small times while for larger times the sensitivity becomes significant. These observations are in good agreement with the sensitivity measure (\ref{SI}) presented in the second and third row of Figure \ref{fig:IREvsSENS}.

\begin{figure}[!htb]
\begin{center}
\includegraphics[width=0.75\textwidth]{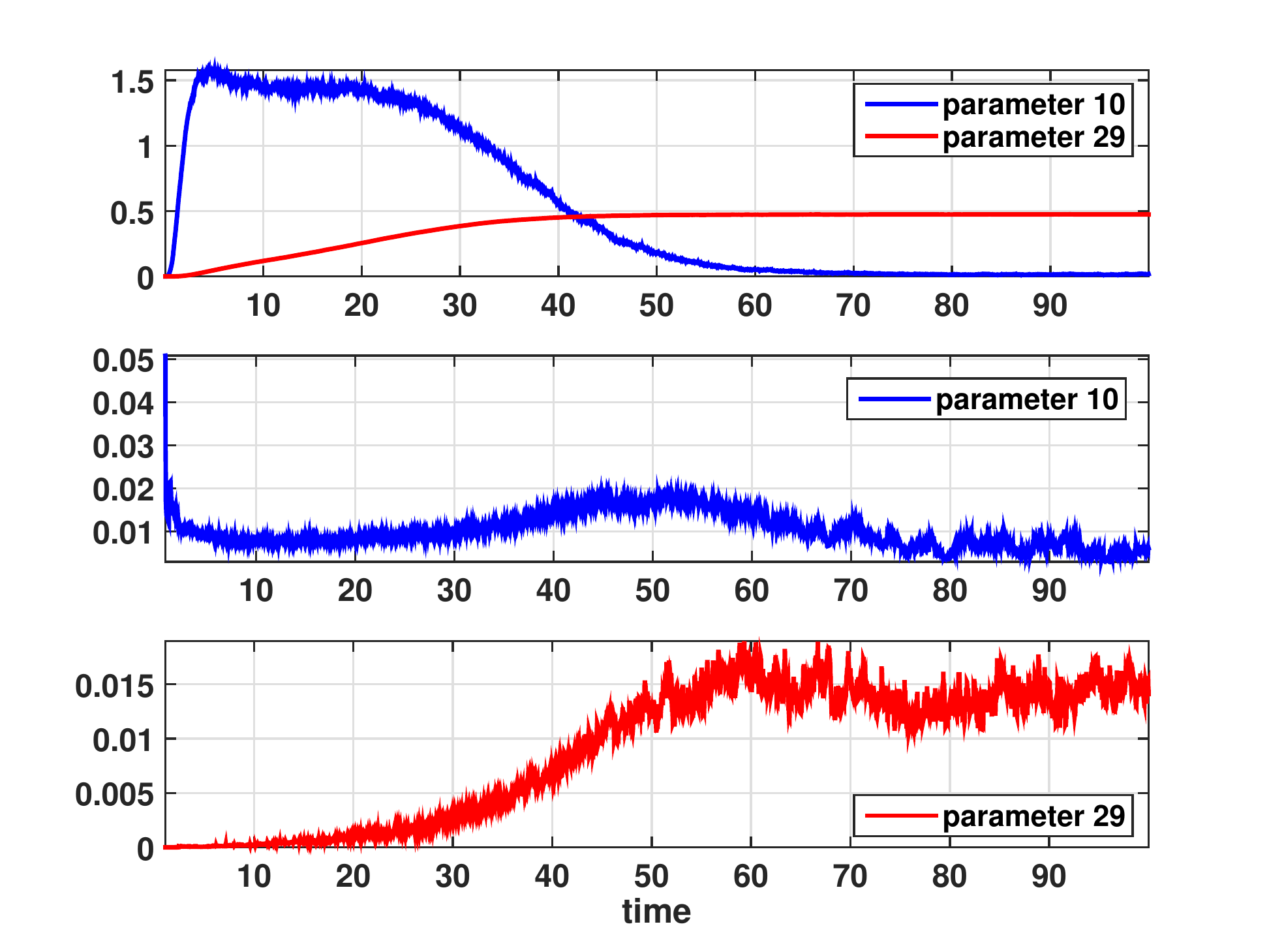}
\caption{Instantaneous RE (\ref{Rel:entr:CTMC}) for the EGFR model  and parameters $10$ and $29$ (first row). The total sensitivity of the system, as defined in (\ref{total:SI}), due to perturbations in the $10$-th parameter (second row) and due to the $29$-th parameter (third row).}
\label{fig:IREvsSENS}
\end{center}
\end{figure}

Moreover there is a crossing in the instantaneous RE which happens around $t=42$. After this time the sensitivity in parameter $\ell=29$ becomes more significant than the sensitivity in parameter $\ell=10$. This is again in agreement with the behavior of the sensitivity measure (\ref{SI}). This observation shows that the transient regime is more sensitive in perturbations in the $10$-th parameters while the equilibrium is more sensitive in perturbations in the $29$-th parameter.

%==============================================================================================================
%  CONCLUSIONS
%==============================================================================================================

\section{Conclusions}

In this paper we presented two pathwise sensitivity measures for the analysis of stochastic systems; the instantaneous relative entropy and its approximation, the instantaneous Fisher information matrix. These sensitivity tools serve as an extension to transient processes of the sensitivity tools presented in  \cite{Pantazis:Kats:13}. Three examples, discrete-time Markov chains, continuous-time Markov Chains and stochastic differential equations, were presented as an application of the new sensitivity measures. In Section 6 we demonstrated, in a biological reaction network, how the proposed pathwise sensitivity measure can be applied to transient, as well as in steady state regimes.

Finally, the pathwise sensitivity method is directly connected to a different sensitivity measure that depends on specific observables. More specifically, if we define the sensitivity index (SI) of the  $\ell$-th observable to the $k$-th parameter as
\begin{equation}
S_{k,\ell} = \frac{\partial }{\partial \theta_k} \EXPECT \left [   f_\ell \left ( \{ X_s  \}_{s=0}^T \right)   \right ] \ ,
\label{SI}
\end{equation}
then IFIM (\ref{inst:FIM:CTMC}) serves as un upper bound of $S_{k,\ell}$ through the inequality,
\begin{equation}
|S_{k,\ell}| \le   \sqrt{\var_{ \PATHS }(f_\ell)} \sqrt{\FISHER \PATHS _{k,k}} \ ,
\label{sens:bound:gen}
\end{equation}
where $F=(f_1,...,f_L)$ is a vector of observable functions.
This inequality follows by rearranging the generalized Cramer-Rao bound for a biased estimator \cite{Casella2002,Kay:93}.
Due to low variance of the estimator of IFIM compared to the variance of a finite difference estimator of $S_{k,\ell}$, the estimation of the right hand side of (\ref{sens:bound:gen}) is faster that that of the left hand side of (\ref{sens:bound:gen}).
In \cite{AKP:2015} the authors use this inequality to efficiently screen out and exclude low sensitivity indices under a pre-specified value and then perform a coupling finite difference algorithm \cite{Anderson:12} to accurately estimate the remaining sensitivity indices $S_{k,\ell}$. In Figure \ref{fig:RE_SI_order} the estimated SIs for the EGFR model discussed in Section \ref{demonstration} are ordered in the parameter direction using only the IFIM (\ref{inst:FIM:CTMC}). Notice that the SIs are then grouped into four distinct regions. For a detailed presentation of this methodology we refer to \cite{AKP:2015}.

\begin{figure}[!htb]
\begin{center}
\includegraphics[width=.49\textwidth]{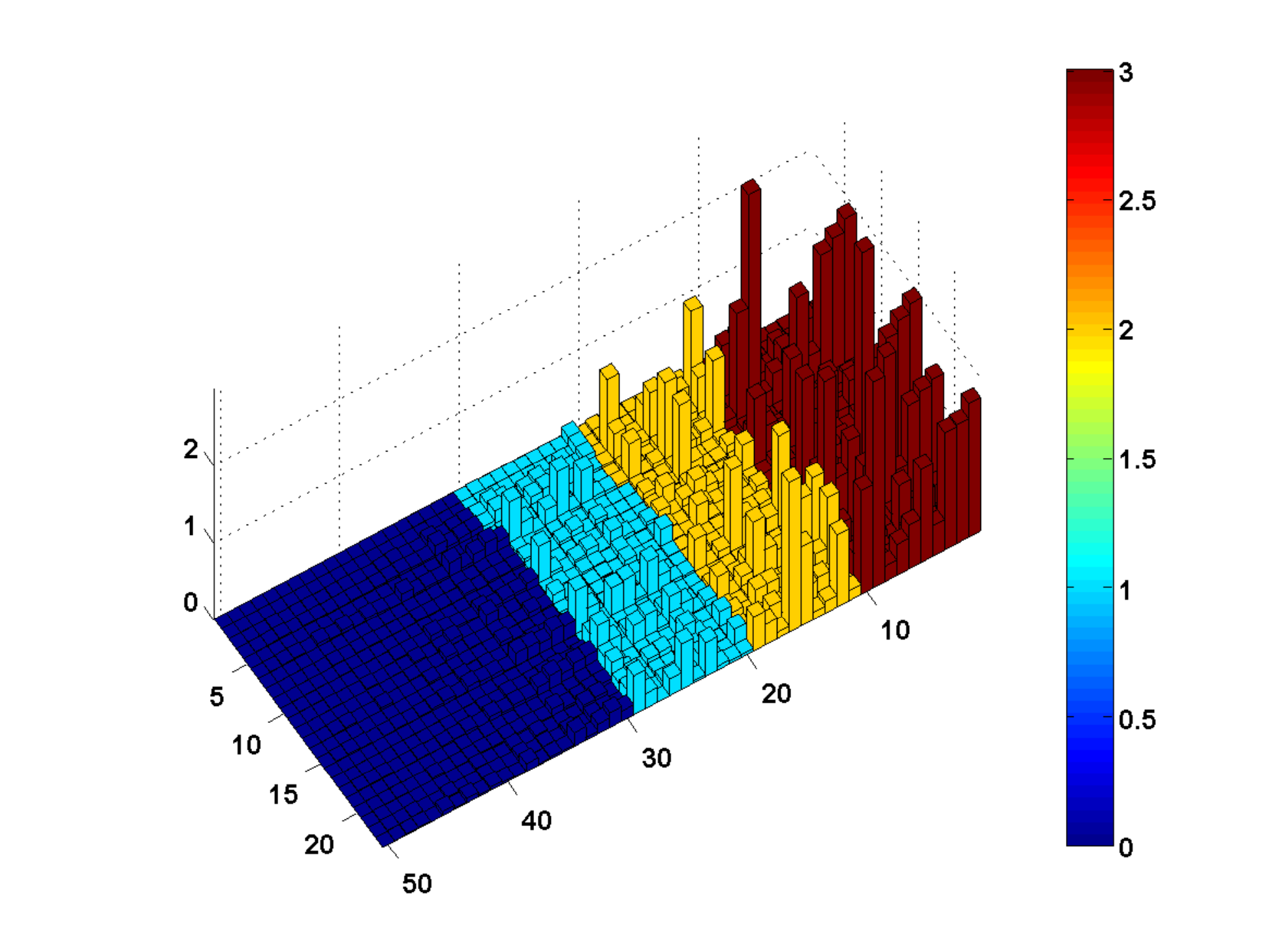}
\includegraphics[width=.49\textwidth]{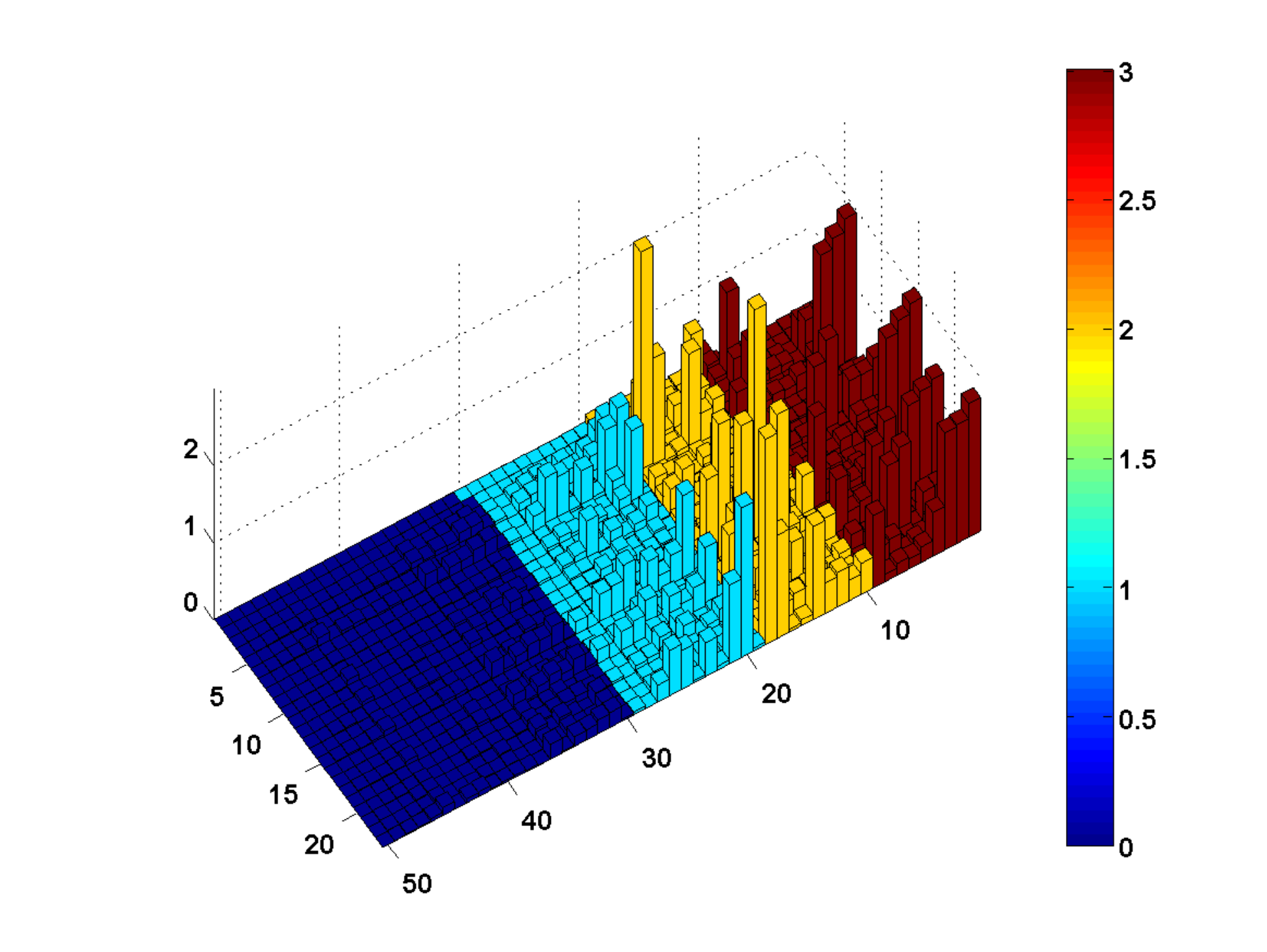}
\caption{Ordering of the sensitivity index (\ref{SI}) in the time interval $[0,50]$ (left) and $[0,100]$ (right) utilizing the averaged IRE (\ref{averaged:RE}). In this case $f_\ell \left ( \{ X_s  \}_{s=0}^T \right) = \frac{1}{T} \int_0^T X_{\ell,s} ds $   , which is the mean concentration of the $\ell$-th species in time interval $[0,T]$.}
\label{fig:RE_SI_order}
\end{center}
\end{figure}

\section*{Acknowledgement}
The work of the authors was supported by the Office of
Advanced Scientific Computing Research, U.S. Department
of Energy, under Contract No. DE-SC0002339 and by the European Union
(European Social Fund) and Greece (National Strategic Reference Framework),
under the THALES Program, grant AMOSICSS.
%The work of MAK    was supported in part by the Office of
%Advanced Scientific Computing Research, U.S. Department
%of Energy under Contracts No. DE-SC0002339.

%\input{referenc}

\bibliographystyle{unsrt}
\bibliography{strategies}

\begin{thebibliography}{10}

\bibitem{Saltelli:08}
A.~Saltelli, M.~Ratto, T.~Andres, F.~Campolongo, J.~Cariboni, D.~Gatelli,
  M.~Saisana, and S.~Tarantola.
\newblock {\em Global Sensitivity Analysis. {T}he Primer}.
\newblock Wiley, 2008.

\bibitem{Distefano:13}
J.~DiStefano III.
\newblock {\em Dynamic Systems Biology Modeling and Simulation}.
\newblock Elsevier, 2013.

\bibitem{Glynn:90}
P.W. Glynn.
\newblock {Likelihood ratio gradient estimation for stochastic systems}.
\newblock {\em {Communications of the ACM}}, {33}({10}):{75--84}, {1990}.

\bibitem{Nakayama:94}
M.~Nakayama, A.~Goyal, and P.~W. Glynn.
\newblock Likelihood ratio sensitivity analysis for {M}arkovian models of
  highly dependable systems.
\newblock {\em Stochastic Models}, 10:701--717, 1994.

\bibitem{Plyasunov:07}
S.~Plyasunov and A.~P. Arkin.
\newblock Efficient stochastic sensitivity analysis of discrete event systems.
\newblock {\em J. Comp. Phys.}, 221:724--738, 2007.

\bibitem{Kim:07}
D.~Kim, B.J. Debusschere, and H.N. Najm.
\newblock Spectral methods for parametric sensitivity in stochastic dynamical
  systems.
\newblock {\em Biophysical Journal}, 92:379--393, 2007.

\bibitem{Rathinam:10}
M.~Rathinam, P.~W. Sheppard, and M.~Khammash.
\newblock Efficient computation of parameter sensitivities of discrete
  stochastic chemical reaction networks.
\newblock {\em J. Chem. Phys.}, 132:034103--(1--13), 2010.

\bibitem{Anderson:12}
David~F. Anderson.
\newblock {An efficient finite difference method for parameter sensitivities of
  continuous-time Markov chains}.
\newblock {\em {SIAM J. Numerical Analysis}}, {50}({5}):{2237--2258}, {2012}.

\bibitem{Khammash:12}
P.W. Sheppard, M.~Rathinam, and M.~Khammash.
\newblock {A pathwise derivative approach to the computation of parameter
  sensitivities in discrete stochastic chemical systems}.
\newblock {\em {J. Chem. Phys.}}, {136}({3}):{034115}, {2012}.

\bibitem{Meskine:09}
H.~Meskine, S.~Matera, M.~Scheffler, K.~Reuter, and H.~Metiu.
\newblock Examination of the concept of degree of rate control by
  first-principles kinetic {Monte Carlo} simulations.
\newblock {\em Surf. Science}, 603(10-12):1724--1730, 2009.

\bibitem{Maes:09}
C.~Maes M.~Baiesi and B.~Wynants.
\newblock Nonequilibrium linear response for markov dynamics, i: jump processes
  and overdamped diffusions.
\newblock {\em J.~Stat.~Phys.}, 2009.

\bibitem{Maes:10}
E.~Boksenbojm M.~Baiesi, C.~Maes and B.~Wynants.
\newblock Nonequilibrium linear response for markov dynamics, ii: Inertial
  dynamics.
\newblock {\em J.~Stat.~Phys.}, 2010.

\bibitem{Pantazis:Kats:13}
Y.~Pantazis and M.~Katsoulakis.
\newblock A relative entropy rate method for path space sensitivity analysis of
  stationary complex stochastic dynamics.
\newblock {\em J. Chem. Phys.}, 138(5):054115, 2013.

\bibitem{DKPP:2014}
P.~Dupuis, M.A. Katsoulakis, Y.~Pantazis, and P.~Plech{\'a}{\v{c}}.
\newblock Sesnitivity bounds and error estimates for stochastic models.
\newblock {\em (in preparation)}.

\bibitem{AKP:2015}
G.~Arampatzis and Y.~Pantazis M.~A.~Katsoulakis.
\newblock Accelerated sensitivity analysis in high-dimensional stochastic
  reaction networks.
\newblock {\em Submitted to PLoS ONE}.

\bibitem{Kullback:59}
S.~Kullback.
\newblock {\em Information theory and statistics}.
\newblock John Wiley and Sons, NY, 1959.

\bibitem{Cover:91}
T.~Cover and J.~Thomas.
\newblock {\em {Elements of Information Theory}}.
\newblock John Wiley \& Sons, 1991.

\bibitem{Kipnis:99}
C.~Kipnis and C.~Landim.
\newblock {\em Scaling Limits of Interacting Particle Systems}.
\newblock Springer-Verlag, 1999.

\bibitem{Avellaneda:97}
M.~Avellaneda, C.A. Friedman, R.~Holmes, and D.J. Samperi.
\newblock Calibrating volatility surfaces via relative-entropy minimization.
\newblock {\em Social Science Research Network}, 1997.

\bibitem{Liu:06}
HB~Liu, W~Chen, and A~Sudjianto.
\newblock {Relative entropy based method for probabilistic sensitivity analysis
  in engineering design}.
\newblock {\em {J. Mechanical Design}}, {128}:{326--336}, {2006}.

\bibitem{Limnios:01}
N.~Limnios and G.~Oprisan.
\newblock {\em Semi-Markov Processes and Reliability}.
\newblock Springer, 2001.

\bibitem{Abramov:05}
R.~V. Abramov, M.~J. Grote, and A.~J. Majda.
\newblock {\em Information Theory and Stochastics for Multiscale Nonlinear
  Systems}.
\newblock CRM Monograph Series, 2005.

\bibitem{Liptser:77}
R.~S. Liptser and A.~N. Shiryaev.
\newblock {\em Statistics of Random Processes: {I \& II}}.
\newblock Springer, 1977.

\bibitem{Oksendal:00}
B~Oksendal.
\newblock {\em Stochastic Differential Equations: {A}n introduction with
  applications}.
\newblock Springer-Verlag, 2000.

\bibitem{TPKH:2015}
A.~Tsourtis, Y.~Pantazis, and V.~Harmandaris M.A.~Katsoulakis.
\newblock Parametric sensitivity analysis for stochastic molecular systems
  using information theoretic metrics.
\newblock {\em Submitted to J of Chemical Physics}.

\bibitem{Kholodenko:99}
Boris~N Kholodenko, Oleg~V Demin, Gisela Moehren, and Jan~B Hoek.
\newblock Quantification of short term signaling by the epidermal growth factor
  receptor.
\newblock {\em Journal of Biological Chemistry}, 274(42):30169--30181, 1999.

\bibitem{Moghal:99}
N.~Moghal and P.W. Sternberg.
\newblock Multiple positive and negative regulators of signaling by the egf
  receptor.
\newblock {\em Curr. Opin. Cell. Biol.}, 11:190--196, 1999.

\bibitem{Hackel:99}
P.O. Hackel, E.~Zwick, N.~Prenzel, and A.~Ullrich.
\newblock Epidermal growth factor receptors: critical mediators of multiple
  receptor pathways.
\newblock {\em Curr. Opin. Cell. Biol.}, 11:184--189, 1999.

\bibitem{Schoeberl:02}
B.~Schoeberl, C.~Eichler-Jonsson, E.D. Gilles, and G.~Muller.
\newblock Computational modeling of the dynamics of the {MAP} kinase cascade
  activated by surface and internalized {EGF} receptors.
\newblock {\em Nature Biotechnology}, 20:370--375, 2002.

\bibitem{Casella2002}
G.~Casella and R.L. Berger.
\newblock {\em Statistical Inference}.
\newblock Duxbury advanced series in statistics and decision sciences. Thomson
  Learning, 2002.

\bibitem{Kay:93}
S.~M. Kay.
\newblock {\em Funtamentals of {S}tatistical {S}ignal {P}rocessing:
  {E}stimation {T}heory}.
\newblock Prentice-Hall, Englewood Cliffs, NJ, 1993.

\end{thebibliography}

\end{document}